\documentclass[11pt,a4paper,reqno]{amsart}%[a4,leqno,11pt]{jsarticle}%{amsart}%{jsarticle}
\usepackage{amsmath,amsthm,amssymb,amscd}
\usepackage{graphicx}
\usepackage{ulem}
\graphicspath{{./figures/}}

\usepackage{ascmac}

 \usepackage{bm}
\usepackage[usenames]{color}
\usepackage{enumerate}
\usepackage{bigints}
\usepackage{amsthm}
\theoremstyle{plain}
 \newtheorem{theorem}{Theorem}[section]

 \newtheorem{proposition}[theorem]{Proposition}
 \newtheorem{fact}[theorem]{Fact}
 \newtheorem*{fact*}{Fact}
 \newtheorem{lemma}[theorem]{Lemma}
 \newtheorem{corollary}[theorem]{Corollary}
 \theoremstyle{remark}
 
 \newtheorem{remark}[theorem]{Remark}
 
 \newtheorem{example}[theorem]{Example}
\numberwithin{equation}{section}

\renewcommand{\Re}{{\rm Re}}
\renewcommand{\Im}{{\rm Im}}

\newcommand{\vect}[1]{\boldsymbol{#1}}
\newcommand{\inner}[2]{\left\langle{#1},{#2}\right\rangle}

\newcommand{\Nil}{\operatorname{Nil}_3}
\newcommand{\nil}{\mathfrak{nil}_3}

\usepackage{mathtools}
\mathtoolsset{showonlyrefs=true}%ref,eqrefしたものだけ番号が付く
\title[Framed null curves and timelike surfaces via Lorentzian harmonic maps]{Framed null curves and timelike surfaces via Lorentzian harmonic maps into de-Sitter 2-space}
\author[S.Akamine]{Shintaro Akamine}
\address[Shintaro Akamine]{
College of Bioresource Sciences,
Nihon University, 
1866 Kameino, Fujisawa, Kanagawa, 252-0880, Japan}
\email{akamine.shintaro@nihon-u.ac.jp}
\thanks{The first author was supported by JSPS KAKENHI Grant Number 23K12979.}

\author[H.Kiyohara]{Hirotaka Kiyohara}
\address[Hirotaka Kiyohara]{
Department of Mathematics Education, Osaka Kyoiku University, 4-698-1 Asahigaoka, Kashiwara, Osaka, 582-8582, Japan}
\email{kiyohara-h51@cc.osaka-kyoiku.ac.jp}
\thanks{The second author was supported by JST SPRING, Grant Number JPMJSP2119 and Foundation of Research Fellows, The Mathematical Society of Japan.}

\date{\today}

\keywords{Timelike constant mean curvature surface, Lorentzian harmonic map, Minkowski space, Heisenberg group}
\subjclass[2010]{Primary 53A10; Secondary 53B30, 57R45.}

\begin{document}
\maketitle

\begin{abstract}
We construct a class of Lorentzian harmonic maps into the de-Sitter $2$-space satisfying the eigenvalue equation $\Box N=2H^2N$ for the d'Alambert operator $\Box$ and a non-zero constant $H$ from framed null curves. We also investigate two classes of timelike surfaces associated with these Lorentzian harmonic maps: the first one is timelike surfaces with constant mean curvature $H$ in Lorentz-Minkowski $3$-space and the second one is timelike minimal surfaces in the three-dimensional Lorentzian Heisenberg group $\Nil(H)$. In particular, we characterize some properties of singularities on timelike minimal surfaces in $\Nil(H)$ via an invariant of framed null curves.
\end{abstract}

\section{Introduction}

The study of harmonic maps with values in space forms has developed together with the study of the corresponding classes of surfaces.
Beginning with the characterization by Ruh-Vilms \cite{RV}, which states that a surface in Euclidean $3$-space $\mathbb{E}^3$ has constant mean curvature (CMC) if and only if its Gauss map is a harmonic map into the unit sphere $\mathbb{S}^2$, Kenmotsu \cite{Kenmotsu} derived an explicit representation formula for CMC surfaces via harmonic maps.
More recently, it has been shown that such $\mathbb{S}^2$-valued harmonic maps also characterize spacelike maximal surfaces in the three dimensional Lorentzian Heisenberg group $\Nil$ (see \cite{BK,Lee}).

On the other hand, as a method for constructing harmonic maps, the DPW method \cite{DPW} based on the loop group method is now widely known, and this approach is applicable to harmonic maps into general symmetric spaces as well.
In particular, harmonic maps into the de-Sitter $2$-sphere $\mathbb{S}^2_1$, which are the main object of the present paper, have been studied from this viewpoint. See, for instance,~\cite{BS,DIT,KK}. 

Although such general methods for constructing harmonic maps are known, it remains difficult to construct harmonic maps and the corresponding surfaces with prescribed properties.
In this paper, we provide a systematic framework for analyzing a certain class of Lorentzian harmonic maps taking values in $\mathbb{S}^2_1$, as well as surfaces whose Gauss maps are given by such harmonic maps.

A Lorentzian harmonic map $\nu$ into $\mathbb{S}^2_1$ is characterized, via a suitable stereographic projection $\pi$ from $\mathbb{S}^2_1$ into the paracomplex plane $\mathbb{C}'$, by the harmonic map equation for $g := \pi \circ \nu$:
\begin{equation}\label{eq:Harmonic}
g_{z\bar{z}} - \frac{2\bar{g}}{1 + |g|^2} g_z g_{\bar{z}} = 0.
\end{equation}
Moreover, there is the following notable correspondence between surfaces whose Gauss maps are expressed by the same function $g$, which satisfies the equation \eqref{eq:Harmonic}.

\begin{fact}[\cite{AK1}, cf.  {\cite[Theorem 4.1]{KK}}]\label{fact:correspond}
For arbitrary real constant $H\neq 0$, any solution $g$ to the equation \eqref{eq:Harmonic} and the 1-form $\omega=\hat{\omega} dz$ defined by $\hat\omega = -j \bar{g}_{z} / (1+|g|^2)^2$, the following assertions hold.
\begin{itemize}
\item[(1)] The following surface $f_L$ with Gauss map $N_L$
\[
f_L = \frac{2}{H}\Re \bigintss \begin{pmatrix} g^2+1\\j(g^2-1)\\2jg \end{pmatrix}\omega,\quad N_L = \frac{ -1}{1+|g|^2} \begin{pmatrix} j (g-\bar{g}) \\ g+\bar{g} \\ 1-|g|^2 \end{pmatrix}, 
\]
gives a timelike constant mean curvature $H$ surface in Lorentz-Minkowski $3$-space $\mathbb{L}^3$ with signature $(-++)$.
\item[(2)]  The following surface $f$ 
\[
f =\begin{pmatrix} f^1\\f^2\\f^3 \end{pmatrix}=  \frac{2}{H}\Re \bigintss \begin{pmatrix} g^2+1\\j(g^2-1)\\2g-H f^2(g^2+1)+H f^1j(g^2-1) \end{pmatrix}\omega
\]
gives a timelike minimal surface in Lorentzian Heisenberg group $\Nil(H)$ with a left invariant metric $g_+$ whose Gauss map is  
\[
N=\frac{g+\bar{g} }{1-|g|^2} E_1+ \frac{j(g-\bar{g}) }{1-|g|^2} E_2+ \frac{1+|g|^2}{1-|g|^2} E_3,
 \]
 where $\{E_1,E_2,E_3\}$ is an orthonormal basis consisting of left invariant vector fields of $\Nil(H)$ so that $g_+(E_1, E_1)=-1$ and $g_+(E_2, E_2)=g_+(E_3, E_3)=1$.\end{itemize}
\end{fact}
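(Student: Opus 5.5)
The strategy is to verify both assertions by direct computation in paracomplex (null) coordinates. Throughout, $z=x+jy$ with $j^2=1$, $\partial_z=\frac12(\partial_x+j\partial_y)$, and $\Re$ taken paracomplex-linearly.

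\textbf{Step 1: closedness of the integrands.} First I will show that the vector-valued 1-form $\Phi\,\omega$ is closed under its real part. Here $\Phi=(g^2+1,\,j(g^2-1),\,2jg)^T$. Since $d(\Re(\Phi\omega))=\Re(d(\Phi\hat\omega)\wedge dz)$, this amounts to showing $(\Phi\hat\omega)_{\bar z}$ is real in the appropriate sense; I will in fact aim to show it is $\bar z$-holomorphic-compatible, i.e. that $\Re((\Phi\hat\omega)_{\bar z})$ vanishes. Differentiating $\hat\omega=-j\bar g_z/(1+|g|^2)^2$ in $\bar z$ produces $\bar g_{z\bar z}$. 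Applying the conjugate of \eqref{eq:Harmonic}, namely $\bar g_{z\bar z}=2g\bar g_z\bar g_{\bar z}/(1+|g|^2)$, this term can be rewritten. Combining it with the $\bar z$-derivatives of $g^2$ and $g$ in $\Phi$, the expression $(\Phi\hat\omega)_{\bar z}$ becomes of the form $A\,g_{\bar z}\bar g_z+B\,|\bar g_z|^2\cdots$, which I expect to be real (equal to its conjugate after swapping $z,\bar z$). Hence the real part of the $dz\wedge d\bar z$ coefficient cancels. The same computation will be done for part (2). The third component there contains $f^1,f^2$ themselves, so its closedness will use $f^1_z=\frac1H(g^2+1)\hat\omega$ and $f^2_z=\frac jH(g^2-1)\hat\omega$ together with the symmetric cross terms $f^i_{\bar z}f^k_z$.

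\textbf{Step 2: part (1).} Next I will compute the induced metric. From $f_{Lz}=\frac1H\Phi\hat\omega$ and the null identity $\langle\Phi,\Phi\rangle=-(g^2+1)^2+(g^2-1)^2+4g^2\cdot(\text{sign})=0$ in $(-++)$ with $j^2=1$, we get $\langle f_z,f_z\rangle=0$. This shows $z$ gives null coordinates. Then $\langle f_z,f_{\bar z}\rangle=\frac{1}{H^2}\langle\Phi,\bar\Phi\rangle|\hat\omega|^2$, with $\langle\Phi,\bar\Phi\rangle=\pm2(1+|g|^2)^2$. So the metric is $e^{u}dzd\bar z$ with $e^u\propto|\bar g_z|^2/(H^2(1+|g|^2)^2)$, nondegenerate where $\bar g_z\neq0$, i.e. timelike. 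I will check $\langle N_L,\Phi\rangle=\langle N_L,\bar\Phi\rangle=0$ and $\langle N_L,N_L\rangle=1$, which identifies $N_L$ as the unit normal. Finally I compute $H$ via $f_{z\bar z}=\frac{e^u}{2}H_{\mathrm{mc}}N_L$ (up to the sign convention). The Weierstrass data give $f_{z\bar z}=\frac1H(\Phi\hat\omega)_{\bar z}$, which Step 1 already expressed in closed form. Comparing with $N_L$ should yield mean curvature exactly $H$.

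\textbf{Step 3: part (2).} For $\Nil(H)$, I will use the left-invariant frame, presumably $E_1,E_2$ involving $\partial_{x_1},\partial_{x_2}$ plus $\pm\frac H2(x_2\partial_3 -x_1\partial_3)$ corrections, and $E_3=\partial_{x_3}$. I will write $f_z=\sum \phi^k E_k$. The modified third component is designed so that $\phi=(\phi^1,\phi^2,\phi^3)=\frac1H(g^2+1,\,j(g^2-1),\,2g)\hat\omega$ up to signs. The metric and normal computations then mirror Step 2 with the signature $(-,+,+)$ on $E_1,E_2,E_3$. Minimality is checked using the Levi-Civita connection of $\Nil(H)$: I write the tension field as $f_{z\bar z}$ plus the connection terms $\nabla_{E_i}E_k$ (which are $\pm\frac H2$ multiples of the $E$'s) applied to $\phi^i\bar\phi^k$. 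Then I show that its component along $N$ vanishes. By the harmonic equation, this reduces to the same identity as in Step 1.

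\textbf{Main obstacle.} I expect Step 3 to be the hardest part: getting the sign and normalization conventions for $\Nil(H)$ and its connection to align with the given third-component formula. I would first fix the model $\Nil(H)=(\mathbb R^3,\ -(dx_1)^2+(dx_2)^2+(dx_3+\frac H2(x_2dx_1-x_1dx_2))^2)$ and adjust signs until the closedness in Step 1 holds.
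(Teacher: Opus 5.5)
The paper gives no proof of this statement. It is quoted from \cite{AK1} (cf.\ \cite[Theorem 4.1]{KK}), so your direct verification in null coordinates is a genuinely different, self-contained route, and it works. Everything rests on one identity. The conjugated equation $\bar g_{z\bar z}=2g\,\bar g_z\bar g_{\bar z}/(1+|g|^2)$ gives $\hat\omega_{\bar z}=2j\bar g\,|g_{\bar z}|^2/(1+|g|^2)^3$. Then, with $\Phi=(g^2+1,\,j(g^2-1),\,2jg)^T$,
\[
(\Phi\hat\omega)_{\bar z}=\frac{2|g_{\bar z}|^2}{(1+|g|^2)^2}\,N_L,\qquad \langle f_{L,z},f_{L,\bar z}\rangle=\frac{2|g_{\bar z}|^2}{H^2(1+|g|^2)^2}.
\]
Hence $f_{L,z\bar z}=H\langle f_{L,z},f_{L,\bar z}\rangle N_L$, which gives closedness and mean curvature $H$ at once. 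A derivation of these formulas as a representation formula from the structure equations would also explain the choice of $\hat\omega$ and yield a converse. Your check proves only the forward direction, but that is all the Fact asserts, and it confirms exactly the normalizations (the $-j$ in $\hat\omega$, the sign of $N_L$, $\tau=H$) that the later sections use.

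Three points in your plan must be corrected.

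\textbf{Closedness criterion.} Since $d\bar z\wedge dz=2j\,dx\wedge dy$, one has $d\,\Re(X\,dz)=2\Im(X_{\bar z})\,dx\wedge dy$. Closedness therefore needs $\Im(X_{\bar z})=0$. Your phrase ``$\Re((\Phi\hat\omega)_{\bar z})$ vanishes'' is wrong: by the identity above, the real part is a nonzero multiple of $N_L$.

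\textbf{Regularity.} $\mathbb{C}'$ has zero divisors, so timelikeness requires $|g_{\bar z}|^2\neq 0$ (and $1+|g|^2\neq0$), not merely $g_{\bar z}\neq 0$. Part (2) also does not simply mirror Step 2. There
\[
\langle f_z,f_{\bar z}\rangle=\frac{2|g_{\bar z}|^2(1-|g|^2)^2}{H^2(1+|g|^2)^4},
\]
so $f$ is an immersion, and $N$ is defined, only off $\{|g|^2=1\}$. This is precisely the singular set $\langle N_L,\bm{e}_3\rangle=0$ that the paper studies.

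\textbf{Normalization of $\Nil(H)$.} This matters most for Step 3: your model is off by a factor $2$. Here $\Nil(H)$ is $\Nil(\tau)$ of \eqref{eq:g+} with $\tau=H$, i.e.\ $\eta=dx_3+H(x_2dx_1-x_1dx_2)$ and $[e_1,e_2]=2He_3$.
\begin{itemize}
\item With this normalization, $\eta(f_z)=f^3_z+H(f^2f^1_z-f^1f^2_z)=2g\hat\omega/H$ exactly. So $f^{-1}f_z=\frac{\hat\omega}{H}(g^2+1,\,j(g^2-1),\,2g)$ with no sign ambiguity.
\item The connection is $\nabla_{E_1}E_2=-\nabla_{E_2}E_1=HE_3$, $\nabla_{E_1}E_3=\nabla_{E_3}E_1=-HE_2$, $\nabla_{E_2}E_3=\nabla_{E_3}E_2=-HE_1$.
\item Then $\nabla_{\partial_{\bar z}}f_z=0$ componentwise. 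The vanishing of the $E_3$-component is equivalent to closedness of the third integrand. The $E_1$- and $E_2$-components vanish by the identity above plus a short algebraic check with $\Re(\bar\phi^2\phi^3)$ and $\Re(\bar\phi^1\phi^3)$.
\item With $\frac H2$ instead, $\eta(f_z)$ keeps the term $-\frac H2(f^2f^1_z-f^1f^2_z)$. This depends on the integration constants of $f^1,f^2$, and minimality fails in general.
\item Closedness of the third integrand involves no metric at all, so it cannot be used to ``adjust'' the model.
\end{itemize}
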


For the Lorentzian Heisenberg group $(\Nil(H), g_+)$, see Section \ref{sec:2} for details. 
While there is a correspondence as in Fact \ref{fact:correspond}, explicitly constructing a solution of the harmonic map equation \eqref{eq:Harmonic} is not straightforward. 
Consequently, constructing the corresponding surfaces $f$ and $f_L$ with the prescribed properties is also difficult. 
In this paper, we therefore consider the construction of specific Lorentzian harmonic maps and the corresponding surfaces $f$ and $f_L$ from famed null curves. 

The main result of this paper is as follows.
\begin{theorem}\label{thm:Intro}
Let $H$ be a non-zero constant and $h=h(s)\colon I \to \mathbb{R}$ be a smooth function on an interval $I\subset \mathbb{R}$ with non-vanishing derivative. Then there exist 
\begin{itemize}
\item a null curve $\gamma=\gamma(s)$ with a null frame $(A,B,C)$ satisfying $d\gamma/ds = A$, 
\begin{equation}\label{eq:ABC_Intro}
A=\frac{S(h)}{H^2}B+\frac{1}{H^2}B'',\quad B= -\frac{H}{2h'} 
{ \left(-1-h^2, 1-h^2, 2h\right)},\quad C=\frac{1}{H}B'
\end{equation}
and the Frenet-Serret equations
\begin{equation}\label{eq:FS_Intro}
(A',B', C')=(A,B,C)\begin{pmatrix}
0 & 0 & H  \\
0 & 0  &  -S(h)/H \\
-S(h)/H  & H  & 0
\end{pmatrix},
\end{equation}
where $S(h)$ is the Schwarzian derivative of $h$,
\item a harmonic map $N_L=-C-tHB$ into $\mathbb{S}^2_1$ satisfying $\Box N_L=2H^2N_L$ with respect to the d'Alambert operator $\Box$ for the Lorentzian metric $\mathrm{I}=t^2H^2ds^2-2dsdt$,
\item a regular timelike B-scroll $f_L(s,t)=\gamma(s)+tB(s)$ in $\mathbb{L}^3$ with constant mean curvature $H$ whose first fundamental form is $\mathrm{I}$ and the Gauss map is $N_L$, and
\item a timelike minimal surface $f=f(s,t)$ in $\Nil(H)$ which corresponds to $f_L$ via the duality in Fact \ref{fact:correspond}.
\end{itemize}

Moreover, regarding on singularities of $f$ in $\Nil(H)$, the following assertions hold.
\begin{itemize}
\item[(1)] The surface $f$ always has singular points and the singular set is $\Sigma=\{(s,t)\in I\times \mathbb{R}\mid t=-\frac{C_3(s)}{HB_3(s)}\}$, where $B=(B_1,B_2,B_3)$ and $C=(C_1,C_2,C_3)$. 
\end{itemize}
\begin{itemize}
\item[(2)] The surface $f$ is a front at $(s,t)\in \Sigma$ if and only if $S(h)(s)\neq 0$.
\item[(3)] The surface $f$ is $\mathcal{A}$-equivalent to the cuspidal cross cap at $(s,t)\in \Sigma$ if and only if $S(h)(s)= 0$ and $S(h)'(s)\neq 0$.
\end{itemize}
\end{theorem}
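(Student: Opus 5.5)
The plan is to build everything from the single function $h$ by a direct computation, then transfer to $\Nil(H)$ through Fact \ref{fact:correspond} and analyse singularities with the standard criteria for fronts and frontals.

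\emph{Step 1 (the framed null curve).} Put $P=(-1-h^2,1-h^2,2h)$, so $B=-\frac{H}{2h'}P$. In signature $(-++)$ one checks $\langle P,P\rangle=0$, $P'=h'(-2h,-2h,2)$, $\langle P,P'\rangle=0$ and $\langle P',P'\rangle=4h'^2$. Hence $B$ is null and $\langle B',B'\rangle=H^2$, so $C=B'/H$ is a unit spacelike vector orthogonal to $B$.

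The key identity is the classical one for the Schwarzian. Write $h=y_2/y_1$ with $y_i''+\tfrac12S(h)y_i=0$. Then $1/h'\propto y_1^2$, $h/h'\propto y_1y_2$ and $h^2/h'\propto y_2^2$ all solve $u'''+2S(h)u'+S(h)'u=0$. Consequently
\[
B'''=-2S(h)B'-S(h)'B .
\]
With $A=(S(h)B+B'')/H^2$, this gives
\[
A'=-\tfrac{S(h)}{H}C,\qquad C'=\tfrac1HB''=HA-\tfrac{S(h)}{H}B,
\]
which are the equations \eqref{eq:FS_Intro}. The inner products $\langle A,A\rangle=0$, $\langle A,C\rangle=0$ and $\langle A,B\rangle=-1$ follow by differentiating the relations already known for $B$ and $C$; for instance $\langle B,B''\rangle=-\langle B',B'\rangle=-H^2$. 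Finally set $\gamma=\int A\,ds$.

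\emph{Step 2 (B-scroll and harmonic map).} For $f_L=\gamma+tB$ we have $(f_L)_s=A+tHC$ and $(f_L)_t=B$. This gives $\mathrm{I}=t^2H^2ds^2-2dsdt$, which is nondegenerate and Lorentzian, so $f_L$ is regular and timelike. The vector $N_L=-C-tHB$ is unit and orthogonal to both $A+tHC$ and $B$. Computing the second fundamental form from $(N_L)_s$ and $(N_L)_t=-HB$ via \eqref{eq:FS_Intro} yields mean curvature $H$. Then $\Box N_L=2H^2N_L$ follows either from the Ruh--Vilms type identity $\Box N=-\langle dN,dN\rangle N$ for timelike CMC surfaces, or directly by applying $\Box$ in the coordinates $(s,t)$. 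Setting $g=\pi\circ N_L$, the surface $f_L$ coincides (up to translation) with the one in Fact \ref{fact:correspond}(1). Item (2) of that Fact then produces the timelike minimal surface $f$ in $\Nil(H)$.

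\emph{Step 3 (singularities).} The Gauss map $N$ of $f$ has denominator $1-|g|^2$, which is proportional to the third component of $N_L$. I would show from the integrand in Fact \ref{fact:correspond}(2) that $f_s\times f_t$ (taken in the left-invariant frame) equals a nonvanishing factor times $\langle N_L,e_3\rangle=-C_3-tHB_3$. This identifies $\Sigma$ as in (1), and the set is nonempty for every $s$ with $B_3(s)\neq0$. Along $\Sigma$ the singular points are nondegenerate, and a unit normal $\nu$ extends smoothly (obtained from $N_L$ through the left-invariant frame), so $f$ is a frontal.

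I would then take the null vector field $\eta$ (kernel of $df$ on $\Sigma$) and a parametrization $\sigma(s)$ of $\Sigma$, and compute the functions $\det(\sigma',\eta\nu,\nu)$ and $\psi(s)=\det(\hat\sigma',\nu\circ\sigma,d\nu(\eta))$. The main obstacle is the explicit reduction of $\psi$ using \eqref{eq:FS_Intro}; I expect it to collapse to a nonzero multiple of $S(h)$. Granting this, (2) follows because $f$ is a front exactly where $d\nu(\eta)\neq0$, which is equivalent to $S(h)(s)\neq0$. For (3) I would apply the Fujimori--Saji--Umehara--Yamada criterion: a nondegenerate frontal is $\mathcal A$-equivalent to a cuspidal cross cap exactly when $\eta$ is transversal to $\Sigma$, $\psi=0$ and $\psi'\neq0$, which translates into $S(h)=0$ and $S(h)'\neq0$. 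Transversality of $\eta$ should be visible from the form of $\mathrm{I}$.
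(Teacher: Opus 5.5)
Your Steps 1 and 2 are essentially fine. Getting \eqref{eq:FS_Intro} from the classical fact that $1/h'$, $h/h'$, $h^2/h'$ solve $u'''+2S(h)u'+S(h)'u=0$, i.e.\ $B'''=-2S(h)B'-S(h)'B$, is a neat shortcut compared with the paper's route through Lemma~\ref{lem:B_construction} and the Weierstrass-type formula for null curves. You should still check $\det(B,B',B'')=H^3$, which gives the orientation $C=A\times B$.

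The genuine gap is in Step~3, which carries all the content of (2) and (3). You postpone the decisive computation (``I expect it to collapse to a nonzero multiple of $S(h)$. Granting this\dots''). That expectation is false, and so is your claim that $\eta$ is transversal to $\Sigma$.

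Fact~\ref{fact:correspond_Bscroll} gives, in the frame $(E_1,E_2,E_3)$, $f_t=B_1E_1+B_2E_2-B_3E_3$ and $f_s=(A_1+tHC_1)E_1+(A_2+tHC_2)E_2+(A_3+tHC_3+t^2H^2B_3)E_3$; this uses $C_3=A_1B_2-A_2B_1$ and $B\times C=-B$. Expanding $\bm{e}_3=-B_3A-A_3B+C_3C$ gives the identity $C_3^2-2A_3B_3=1$. With it one checks that $f_s=\mu f_t$ on $\Sigma$ with $\mu=-A_3/B_3$, so $\eta=\partial_s+(A_3/B_3)\partial_t$. Take $\sigma(s)=(s,t(s))$; its slope $t'$ is the coefficient of $B$ in \eqref{eq:cL}. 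Then
\[
df(\sigma')=\Bigl(\frac{1}{B_3^{2}}+\frac{S(h)}{H^{2}}\Bigr)f_t .
\]
Hence $\eta$ is tangent to $\Sigma$ exactly where $H^2+S(h)B_3^2=0$, i.e.\ $h'^2+S(h)h^2=0$, which is precisely the situation of Proposition~\ref{prop:NotCE}.

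For $h=\tanh s$ (where $S(h)\equiv-2$) this happens at $\tanh^2 s=2-\sqrt3$, the swallowtails of the last section. There $\hat\sigma'=0$, so $\psi=0$, although $S(h)\neq0$ and $f$ is a front. So $\psi$ must vanish wherever $H^2+S(h)B_3^2=0$ and cannot be a nonzero multiple of $S(h)$. Your derivation of (2) through $\psi$ therefore breaks down at such points. Also, the form of $\mathrm{I}$ is the metric of the regular surface $f_L$ and cannot detect $\ker df$.

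Here is how to repair the argument within your framework.
\begin{itemize}
\item[(2)] Compute $d\nu(\eta)$ itself rather than $\psi$. At a corank-one singular point, $f$ is a front iff $d\nu(\eta)\neq0$, and no transversality is needed.
\item[(3)] The display shows that $\eta$ is transversal wherever $S(h)=0$, so the Fujimori--Saji--Umehara--Yamada criterion does apply there. You still have to show, by an actual computation, that $\psi$ equals $S(h)$ times a function that does not vanish near such points.
\item[(1)] You need $B_3\neq0$ on $\Sigma$, both for $\Sigma$ to be the stated graph and for non-degeneracy. This follows from $B_3=-Hh/h'$ and $C_3=-1+hh''/h'^2$, so $C_3=-1$ wherever $B_3=0$.
\end{itemize}

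The paper avoids all computations in $\Nil(H)$ by invoking the duality criteria of Theorem~\ref{thm:criteria2}, which move the question to the curve $c_L=f_L\circ c$ in $\mathbb{L}^3$. There \eqref{eq:cL} gives $\langle c_L',\bm{e}_3\rangle=-\kappa_2B_3/H$, and, when $\kappa_2=0$, $\langle c_L'',\bm{e}_3\rangle=-\kappa_2'B_3/H$. Since $c_L'\neq0$, the condition $c_L'\nparallel\bm{e}_3$ is automatic once $\langle c_L',\bm{e}_3\rangle=0$. With $\kappa_2=-S(h)/H$ this yields (2) and (3) in a few lines.
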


Here, a singular point of $f$ is a point where $f$ is not immersed. For the definition of a front and for details on singular points, see Section \ref{sec:2}. A cuspidal cross cap appearing in $(3)$ above refers to a singular point of $f$ at which $f$ is locally diffeomorphic to $f_{CCR}(u,v)=(u,v^2,uv^3)$.

One observes that properties (2) and (3) in Theorem \ref{thm:Intro} are invariant under isometries of $\mathbb{L}^3$ acting on the null curve $\gamma$, as well as under linear fractional transformations of $h$ that leave $S(h)$ invariant.  Based on this observation, one can state the following invariance property of singularities. For more detailed statements, see Section 5. 

\begin{corollary}\label{cor:Intro}
For a function $h$ satisfying the condition (3) in Theorem \ref{thm:Intro}, there exists a family of timelike minimal surfaces $\{f^{O}\}_{O\in O(2,1)}$ in $\Nil(H)$ associated with the Lorentz orthogonal group $O(2,1)$ such that each surface $f^{O}$ admits a cuspidal cross cap.
\end{corollary}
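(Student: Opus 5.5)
The plan is to let $O(2,1)$ act on the data of Theorem \ref{thm:Intro} and check that condition (3) is unchanged by this action. Fix $h$ with $S(h)(s_0)=0$ and $S(h)'(s_0)\neq 0$, and let $\gamma$, $(A,B,C)$ be the null curve and null frame given by Theorem \ref{thm:Intro}. For $O\in O(2,1)$ set
\[
\gamma^O=O\gamma,\qquad (A^O,B^O,C^O)=(OA,OB,OC).
\]
Since $O$ is linear and preserves the Lorentzian inner product, the following hold:
\begin{itemize}
\item $(A^O,B^O,C^O)$ is again a null frame with $d\gamma^O/ds=A^O$;
\item it satisfies the Frenet--Serret equations \eqref{eq:FS_Intro} with the same coefficient $S(h)$.
\end{itemize}
We then define
\[
f_L^O=\gamma^O+tB^O,\qquad N_L^O=-C^O-tHB^O=ON_L .
\]
Because the first and second fundamental forms of the B-scroll are computed only from inner products among $A,B,C$ and their derivatives, $f_L^O$ is a regular timelike CMC $H$ surface. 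Its first fundamental form is again $\mathrm{I}$, and $N_L^O$ is a unit normal. We fix the sign of the normal by the formula $N_L^O=-C^O-tHB^O$, not by a cross product, so that $H$ is preserved even when $\det O=-1$. Since $\Box$ depends only on $\mathrm{I}$, we get $\Box N_L^O=O\Box N_L=2H^2N_L^O$.

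Next we apply the duality of Fact \ref{fact:correspond}. Let $g^O=\pi\circ N_L^O$; it solves \eqref{eq:Harmonic}. Fact \ref{fact:correspond} then produces a timelike minimal surface $f^O$ in $\Nil(H)$ dual to $f_L^O$. This defines the family $\{f^O\}_{O\in O(2,1)}$.

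It remains to show that every $f^O$ has a cuspidal cross cap, at $(s_0,t_0^O)$ with $t_0^O=-C^O_3(s_0)/(HB^O_3(s_0))$. The singular set moves with $O$, since it is defined through the third components of $B^O$ and $C^O$, but the criterion in (3) involves only $S(h)$. The cleanest route for the identity component $SO^+(2,1)$ is to realize $f^O$ directly as the surface of Theorem \ref{thm:Intro} for another function. The map $h\mapsto (ah+b)/(ch+d)$ lifts, through the vector $\frac{-H}{2h'}(-1-h^2,1-h^2,2h)$, to the standard action of $PSL(2,\mathbb{R})\cong SO^+(2,1)$ on the null cone. I will verify the scaling factor $1/h'$ by the chain rule, $(\tilde h)'=h'/(ch+d)^2$. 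With this, $B^O$ is exactly the $B$ attached to $\tilde h=(ah+b)/(ch+d)$, up to a translation of $\gamma$. Since $S(\tilde h)=S(h)$, assertion (3) of Theorem \ref{thm:Intro} applies verbatim to $\tilde h$, and $f^O$ has a cuspidal cross cap.

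The main obstacle is the remaining components of $O(2,1)$. These are the time-orientation-reversing and orientation-reversing ones, for example $O=\mathrm{diag}(-1,1,1)$ or $\mathrm{diag}(1,-1,1)$, and they need not be realized by a Möbius change of $h$. For these I would first try compositions with $h\mapsto -h$ and $h\mapsto 1/h$, together with reparametrization $s\mapsto -s$, which flips the sign of $h'$. If some component still escapes, I would instead revisit the proof of (2)--(3). The aim would be to check that the criteria for fronts and cuspidal cross caps, computed through the identifier of singularities and the null vector field along $\Sigma$, use only the frame equations \eqref{eq:FS_Intro}, the null-frame relations, and the definition of $\Sigma$ through the frame components. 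That argument then carries over to any frame $(OA,OB,OC)$ with the same $S(h)$.
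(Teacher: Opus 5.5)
The step that fails is your claim that every $f^O$ has a cuspidal cross cap at $(s_0,t_0^O)$ with $t_0^O=-C_3^O(s_0)/(HB_3^O(s_0))$. This presupposes $B_3^O(s_0)\neq 0$, and that is false for some $O$.

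In your own M\"obius picture, $B_3^{\tilde h}=-H\tilde h/\tilde h'$. So $B_3^O(s_0)=0$ exactly when $\tilde h(s_0)\in\{0,\infty\}$. For example, the null rotation induced by $\tilde h=h-h(s_0)$ has $B_3^O(s_0)=0$. When $ch(s_0)+d=0$, Theorem~\ref{thm:Intro} cannot even be applied to $\tilde h$ near $s_0$.

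If $B_3^O(s_0)=0$, then $C_3^O(s_0)\neq 0$. Otherwise $\langle OA,OB\rangle=0$ would follow, exactly as in the proof of (1) of Theorem~\ref{thm:criteria_Bscroll}. Hence $\langle N_L^O,\bm{e}_3\rangle=-C_3^O(s_0)\neq0$ along the whole ruling $s=s_0$. That ruling contains no singular point of $f^O$, and (3) of Theorem~\ref{thm:Intro} is vacuous there.

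A concrete case: take $h=\cot(e^s/2)$, so $S(h)=\frac12(e^{2s}-1)$ vanishes only at $s=0$. Let $O$ be the null rotation induced by $\tilde h=h-\cot(1/2)$. Then $f^O$ has no singular point over $s=0$. All its other singular points are fronts by (2), since $S(\tilde h)=S(h)\neq0$ there. So this $f^O$ has no cuspidal cross cap at all.

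Hence the claim cannot be proved for every $O\in O(2,1)$. What your argument actually yields is a cuspidal cross cap of $f^O$ at $(s_0,t_0^O)$ for every $O$ with $(OB(s_0))_3\neq0$, which is an open dense subset of $O(2,1)$. If $S(h)$ has several admissible zeros, it suffices that this holds at one of them. The paper's argument via Corollary~\ref{cor:invariance1} makes the same tacit assumption when it speaks of ``the corresponding singular point $(s,t^O)$''.

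With this proviso added, your fallback is exactly the paper's proof, and it makes the component-by-component analysis unnecessary:
\begin{itemize}
\item Theorem~\ref{thm:criteria_Bscroll} is stated for an arbitrary frame satisfying the Frenet--Serret equations.
\item $(OA,OB,OC)$ satisfies them with the same $\kappa_2=\langle A',C\rangle=-S(h)/H$, since $O$ preserves inner products.
\item The only use of $C=A\times B$ in that proof, namely to get $B_3\neq0$ on $\Sigma$, works verbatim for $C=-A\times B$.
\end{itemize}
So all of $O(2,1)$ is treated at once.

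The M\"obius route is a pleasant explicit realization. With $ad-bc=\pm1$ it covers both components of $SO(2,1)$: $h\mapsto -h$ and $h\mapsto 1/h$ give $\mathrm{diag}(-1,-1,1)$ and $\mathrm{diag}(-1,1,-1)$. It does not reach $\det O=-1$, however. In that case $H\det(OB,OB',OB'')<0$ violates \eqref{eq:B_cond}, and the reversal $s\mapsto -s$ you propose only returns $SO(2,1)$-images of $f_L$, reparametrized by $(s,t)\mapsto(-s,-t)$. So the ``main obstacle'' you single out is an artifact of that route, not of the problem.
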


Recently, spacelike maximal surfaces with singularities in $\Nil(H)$ have been investigated by Brander-Kobayashi \cite{BK}, and concrete examples of such surfaces admitting prescribed types of singularities have been provided. 
On the other hand, to the best of the authors' knowledge, even explicit examples of timelike minimal surfaces in $\Nil(H)$ possessing specific singularities such as cuspidal cross caps or swallowtails have not yet been known. 
In view of the above situation, starting with Corollary~\ref{cor:Intro}, we present in Sections 5 and 6 methods for constructing timelike minimal surfaces in $\Nil(H)$ with singularities such as cuspidal edges, swallowtails, and cuspidal cross caps, together with explicit examples.

The organization of this paper is as follows.  
In Section \ref{sec:2}, we review basic facts on singularities and theory on minimal surfaces in $\Nil(\tau)$ with singularities.  
In Section \ref{sec:3}, we describe the construction of timelike ruled surfaces in $\mathbb{L}^3$ with constant mean curvature $H$, called {\it B-scrolls}, from framed null curves in $\mathbb{L}^3$, and the corresponding timelike minimal surfaces in $\Nil(H)$ via Fact \ref{fact:correspond}.  
In particular, the part of the proof of Theorem \ref{thm:Intro} concerning the harmonic map $N_L$ is given.
In Section~\ref{sec:4}, the remaining part of the proof of Theorem \ref{thm:Intro} is given.  
First, in Theorem~\ref{thm:criteria_Bscroll}, we describe conditions for $f$ in Theorem \ref{thm:Intro} to be a front and to have a cuspidal cross cap singularity in terms of the curvature function $\kappa_2$, which is an invariant of framed null curves.  
Then, in Section~4.2, we show that any framed null curve is generated from a certain one-variable function $h$, and we state in Theorem \ref{thm:Schwarzian_criterion} that $\kappa_2$ is written by the Schwarzian derivative of $h$.  

In Section 5, as a consequence of Theorem~\ref{thm:Intro}, we observe in Corollary~\ref{cor:invariance1} that singularities on timelike minimal surfaces in $\Nil(H)$ corresponding to $B$-scrolls in $\mathbb{L}^3$ have certain invariance properties.  
In particular, Corollary~\ref{cor:Intro} is proved.  
We also discuss conditions under which $f$ has front singularities that are not cuspidal edges, such as swallowtails.

Finally, in Section 6, we present explicit examples of B-scroll type timelike minimal surfaces in $\Nil(H)$ with cuspidal cross caps and swallowtails.

%%%%%%%%%%%%%%%%%%%%%%%%%%%%%%%%%%
%%%%%%%%%%%%%%%%%%%%%%%%%%%%%%%%%%
%%%%%%%%%%%%%%%%%%%%%%%%%%%%%%%%%%
%%%%%%%%%%%%%%%%%%%%%%%%%%%%%%%%%%
\section{Minimal surfaces in $\Nil(\tau)$ with singularities}\label{sec:2}

In this section, we briefly review the necessary preliminaries on timelike minimal surfaces in Lorentzian Heisenberg group and singularities on them. For further details, see \cite{AK1}.

%%%%%
Let $\mathbb{C}'$ be the set of paracomplex numbers of the form $z=x+jy$, where $x,y\in \mathbb{R}$ and $j$ is the imaginary unit satisfying $j^2=1$. For each $z=x+jy\in \mathbb{C}'$, one can define the following notions:
\begin{itemize}
\item the real part $\Re{z}:=x$ and the imaginary part $\Im{z}:=y$,
\item the conjugate $\bar{z}:=x-jy$, and
\item squared modulus of $z$ as a $|z|^2:=z\bar{z}=x^2-y^2$.
\end{itemize}
It should be remark that the relation $|z|^2<0$ may hold in general and $|jz|^2=-|z|^2$ holds.
%%%%%

Let $\tau$ be a non-zero real number and $\Nil(\tau)$ be the Lorentzian Heisenberg group, which is identified with $\mathbb{R}^3$ endowed with the group multiplication
\begin{equation}\label{eq:group}
(x_1, x_2, x_3)\cdot (y_1, y_2, y_3) = \left(x_1+y_1, x_2+y_2, x_3+y_3+\tau(x_1y_2-x_2y_1)\right).
\end{equation}
In this paper, we consider the following left-invariant metric $g_+$ on $\Nil(\tau)$:
\begin{equation}\label{eq:g+}
g_+=-(dx_1)^2+(dx_2)^2+\eta(\tau)^2,\quad \eta(\tau)=dx_3+\tau \left(x_2dx_1-x_1dx_2\right).
\end{equation}
Formally setting $ \tau = 0 $, the space $(\mathrm{Nil}(\tau), g_+)$ becomes Lorentz-Minkowski $3$-space $\mathbb{L}^3$, and hence $g_+$ is one of the natural left-invariant metrics on $\mathrm{Nil}(\tau)$. For the metric $g_+$, the following left-invariant vector fields $\{E_1,E_2,E_3\}$ give a Lorentzian orthonormal vector fields
\[
E_1=\frac{\partial}{\partial x_1}-\tau x_2\frac{\partial}{\partial x_3},\quad E_2=\frac{\partial}{\partial x_2}+\tau x_1\frac{\partial}{\partial x_3},\quad E_3=\frac{\partial}{\partial x_3},
\]
which satisfy
\[
-g_+(E_1,E_1)=g_+(E_2,E_2)=g_+(E_3,E_3)=1,\quad g_+(E_k,E_l)=0\quad (k\neq l).
\]
Hence, the Lie algebra $\nil(\tau)$ of $\Nil(\tau)$ is naturally identified with $\mathbb{L}^3$ via the basis $e_1=(1,0,0)$, $e_2=(0,1,0)$ and $e_3=(0,0,1)$ satisfying the relations of the Lie bracket 
\[
[e_1,e_2]=2\tau e_3,\quad [e_1,e_3]=0,\quad [e_2,e_3]=0.
\]

An immersion $f\colon M \to \Nil(\tau)$ from a $2$-dimensional connected manifold into $\Nil(\tau)$ is called {\it timelike} if its induced metric $f^*g_+$ is a Lorentzian metric. Under the above identification of $\nil(\tau)$ with $\mathbb{L}^3$, the left translation $f^{-1}N$ of the  unit normal vector field $N$ of $f$ can be considered as a map valued in the de-Sitter $2$-space
\[
\mathbb{S}^2_1=\left \{\sum_{k=1}^3x_ke_k\in \nil(\tau)\mid -x_1^2+x_2^2+x_3^2=1\right \}.
\]
Let us consider the stereographic projection from $\mathbb{S}^2_1$ into $\mathbb{C}'$
\[
\pi \colon \sum_{k=1}^3{x_ke_k} \mapsto \frac{x_1}{1+x_3}+j\frac{x_2}{1+x_3}.
\]
Then the map $g:=\pi \circ f^{-1}N$ is called the {\it normal Gauss map} of $f$ and by using it $f^{-1}N$ can be written as
\[
f^{-1}N=\frac{2\Re{(g)}}{1-|g|^2}e_1+\frac{2\Im{(g)}}{1-|g|^2}e_2+\frac{1+|g|^2}{1-|g|^2}e_3.
\]
In general, it is known that a map
\[
\nu \colon D\subset \mathbb{C}' \to \mathbb{S}^2_1\subset \mathbb{L}^3_{-++}
\]
is harmonic if and only if $g:=-j(\pi_L \circ \nu)$ satisfies equation~\eqref{eq:Harmonic}; see \cite{AK1} and \cite{DIT}. 

\[
\pi_L \colon (x_1,x_1,x_3) \mapsto \frac{x_1}{1-x_3}+j\frac{x_2}{1-x_3}.
\]

There are two types of timelike surfaces whose unit normal vector fields are given by such harmonic maps taking values in $\mathbb{S}^2_1$: timelike constant mean curvature $\varepsilon H$ surfaces in $\mathbb{L}^3$ and timelike minimal surfaces in $\Nil(H)$, listed in Fact \ref{fact:correspond}.

\subsection{Singularities on fronts and frontals}\label{Sec.frontals}
Let $U$ be a domain in $\mathbb{R}^2$ and $u$, $v$ are local coordinates on $U$. 
A smooth map $f\colon U \longrightarrow N^3$ from $U$ into a Riemannian $3$-manifold $(N^3,g)$ is called a {\it frontal} if there exists a unit vector field $n$ along $f$ such that $n$ is perpendicular to $df(TU)$ with respect to the Riemannian metric $g$ of $N^3$. We call $n$ the {\it unit normal vector field} of $f$. Moreover if the map $L=(f,n)$ is an immersion, $f$ is called a {\it front}. 

A point $p\in U$ where a frontal $f$ is not an immersion is called a {\it singular point} of $f$, and we call the set of singular points of $f$ the {\it singular set}. 

Let $\Omega$ be the volume element of $(N^3,g)$. The function $\lambda=\Omega(f_u, f_v, n)$ on $(U;u,v)$ is called the {\it signed area density function} of the frontal $f$. Singular points are characterized as zeros of $\lambda$ and a singular point $p$ is called {\it non-degenerate} (resp.~{\it degenerate}) if $d\lambda_p \neq0$ (resp.~$d\lambda_p=0$). 

Let $U_i$ be domains of $\mathbb{R}^2$ and $p_i$ be points in $U_i$ ($i=1, 2$). Two smooth map germs $f_1\colon (U_1,p_1)\longrightarrow (N^3,f_1(p_1))$ and $f_2 \colon (U_2,p_2)\longrightarrow (N^3,f_2(p_2))$ are {\it $\mathcal{A}$-equivalent} if there exist diffeomorphism germs $\Phi\colon (\mathbb{R}^2,p_1) \longrightarrow (\mathbb{R}^2,p_2)$ and $\Psi\colon (N^3,f_1(p_1)) \longrightarrow (N^3,f_2(p_2))$ such that $f_2 = \Psi \circ f_1\circ \Phi^{-1}$. As typical singular points of frontals, a {\it cuspidal edge}, a {\it swallowtail} and a {\it cuspidal cross cap} are given as singularities which are $\mathcal{A}$-equivalent to 
\[
f_{CE}(u,v) =(u,v^2,v^3),\quad f_{SW}(u,v)=(3u^4+u^2v,4u^3+2uv,v),\quad f_{CCR}(u,v)=(u,v^2,uv^3)
\]
at origin, respectively. Cuspidal edges and swallowtails are singular points of fronts, and cuspidal cross caps are singular points of frontals but not fronts.

In this paper, we consider the situation of $N^3=\Nil(\tau)$.  
 The Heisenberg group $\Nil(\tau)$ is the space $\mathbb{R}^3$ equipped with the group multiplication \eqref{eq:group}, and one can consider the natural left-invariant Riemannian metric associated with \eqref{eq:g+},
\begin{equation}\label{eq:gR}
g_R = (dx^1)^2 + (dx^2)^2 + \eta(\tau)^2, \qquad 
\eta(\tau) = dx^3 + \tau \left( x^2 dx^1 - x^1 dx^2 \right).
\end{equation}
Thus, we may regard $(N^3, g)$ as $(\Nil(\tau), g_R) = (\mathbb{R}^3, g_R)$.

\subsection{Criteria for singularities via the duality between $f$ and $f_L$}
In this section, we recall the following criteria for singularities of timelike minimal surfaces in $\Nil(\tau)$ via the duality in Fact \ref{fact:correspond} proved in \cite{AK1}.
\begin{theorem}\label{thm:criteria2}
Let $z$ be a point of an extended timelike minimal surface $f$ in $\Nil(\tau)$, at which the corresponding constant mean curvature $\tau$ surface $f_L$ is immersed. Let $N_L$ be the unit normal vector field of $f_L$ in $\mathbb{L}^3$. Then, the following assertions hold.
\begin{itemize}
\item[(0)] $z$ is a singular point of $f$ if and only if $\langle N_L, \bm{e}_3\rangle =0$ at $z$,
\item[(1)] $z$ is a non-degenerate singular point of $f$ if and only if $\langle N_L, \bm{e}_3\rangle =0$ and $\langle dN_L, \bm{e}_3\rangle \neq 0$ at $z$.
\end{itemize}
From now on, we assume that $z$ is a non-degenerate singular point of $f$. Then, we can take a singular curve $c$ of $f$ passing through $c(0)=z$, and put $c_L:=f_L\circ c$. Then, 
\begin{itemize}
\item[(2)] $f$ is a front at $z$ if and only if $\langle c_L', \bm{e}_3\rangle  \neq 0$ at $z$,
\item[(3)] $f$ is $\mathcal{A}$-equivalent to the cuspidal edge at $z$ if and only if
\[
\langle c_L', \bm{e}_3\rangle  \neq 0 \quad \text{and}\quad c_L' \nparallel \bm{e}_3\quad \text{at $z$},
\]
that is $ c_L'$ is neither perpendicular nor parallel to $\bm{e}_3$ at $z$.
\item[(4)] $f$ is $\mathcal{A}$-equivalent to the swallowtail at $z$ if and only if
\[
\langle c_L', \bm{e}_3\rangle  \neq 0,\quad  c_L' \parallel \bm{e}_3\quad  \text{and} \quad  c_L'' \nparallel \bm{e}_3\quad \text{at $z$},
\]
\item[(5)] $f$ is $\mathcal{A}$-equivalent to the cuspidal cross cap at $z$ if and only if 
\[
\langle c_L', \bm{e}_3\rangle  = 0,\quad  c_L' \nparallel \bm{e}_3\quad  \text{and} \quad  \langle c_L'', \bm{e}_3\rangle  \neq  0 \quad \text{at $z$}.
\]
\end{itemize}
\end{theorem}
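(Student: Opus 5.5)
The plan is to make the duality of Fact \ref{fact:correspond} concrete at the level of differentials, and then apply the standard criteria for frontals: the Kokubu--Rossman--Saji--Umehara--Yamada criteria for cuspidal edges and swallowtails, and the Fujimori--Saji--Umehara--Yamada criterion for cuspidal cross caps.

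\emph{Step 1: compare $df$ and $df_L$.} Pulling $df$ back by left translation gives $f^{-1}df=\sum \theta^k e_k$. From the formula in Fact \ref{fact:correspond}, $\theta^1$ and $\theta^2$ coincide with $df_L^1$ and $df_L^2$. For the third component, $\eta(\tau)(df)$ reduces to $\frac{2}{H}\Re(2g\omega)$, while $df_L^3=\frac{2}{H}\Re(2jg\omega)$. So $f$ and $f_L$ share the first two components, and their third components differ only by exchanging $g\omega$ with $jg\omega$.

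\emph{Step 2: singular set and non-degeneracy.} Using the Step 1 comparison, compute the signed area density $\lambda$ of $f$ with respect to $g_R$. The unit normal $f^{-1}N$ has the factor $(1-|g|^2)^{-1}$, so I would use the rescaled vector field $\nu=(1-|g|^2)f^{-1}N$. After $g_R$-normalization, $\nu$ is a smooth unit normal of $f$ as a frontal in $(\Nil(\tau),g_R)$. With this normal, I expect
\[
\lambda=(1-|g|^2)\,\mu,
\]
where $\mu$ is a non-vanishing factor that is a multiple of the area density of the immersion $f_L$. Since $\langle N_L,\bm e_3\rangle=-(1-|g|^2)/(1+|g|^2)$, assertion (0) follows. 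Assertion (1) follows because $d\lambda=\mu\,d(1-|g|^2)$ on $\{\lambda=0\}$, which is non-zero exactly when $\langle dN_L,\bm e_3\rangle\neq0$.

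\emph{Step 3: null direction.} Along a non-degenerate singular curve $c$, I would identify a null vector field $\xi$ of $f$, meaning $df(\xi)=0$. By the Step 1 comparison, $df(\xi)=0$ means that $df_L(\xi)$ has vanishing first two components and satisfies $\Re(2g\omega)(\xi)=0$. Hence $df_L(\xi)$ is parallel to $\bm e_3$. Since $\langle N_L,\bm e_3\rangle=0$ on the singular set, $\bm e_3$ is tangent to $f_L$ there, so $\xi$ is determined by $df_L(\xi)\parallel\bm e_3$. The geometric conditions then translate as follows.
\begin{itemize}
\item Transversality of $\xi$ to $c'$ (the cuspidal edge criterion) becomes $c_L'\nparallel\bm e_3$.
\item The swallowtail condition, that $\det(c',\xi)$ has a simple zero, becomes $c_L'\parallel\bm e_3$ together with $c_L''\nparallel\bm e_3$.
\end{itemize}

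\emph{Step 4: front condition (main obstacle).} The hardest part is the front condition (2) and the function $\psi$ in the cuspidal cross cap criterion (5). Here one must compute $\xi\nu$, or equivalently $\det_{g_R}(\hat c',\nu,\xi\nu)$ along $c$, and show that it is a non-zero multiple of $\langle c_L',\bm e_3\rangle$. I would do this computation in the paracomplex coordinate $z$, using the harmonic map equation \eqref{eq:Harmonic} to eliminate $g_{z\bar z}$. The key identity to aim for is that $\xi(1-|g|^2)$ pairs with $\Re(g\omega)$ exactly as $\langle c_L',\bm e_3\rangle$ does.

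\emph{Step 5: conclude.} Given that identity, $L=(f,\nu)$ is an immersion if and only if $\langle c_L',\bm e_3\rangle\neq0$, which proves (2). Assertions (3) and (4) then follow from Step 3. For (5), $\psi$ is proportional to $\langle c_L',\bm e_3\rangle$, so the condition $\psi(0)=0$, $\psi'(0)\neq0$ becomes $\langle c_L',\bm e_3\rangle=0$ and $\langle c_L'',\bm e_3\rangle\neq0$. The first of these holds because $\xi$ is transverse to $c'$, which is the condition $c_L'\nparallel\bm e_3$. The derivative of the non-vanishing proportionality factor does not contribute, since $\psi(0)=0$.
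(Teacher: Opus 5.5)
The paper gives no proof of this theorem; it is quoted from \cite{AK1}, so I judge your argument on its own. Your architecture is the natural one: compare $f^{-1}df$ with $df_L$, locate $\Sigma$ and the null direction, then apply the Kokubu--Rossman--Saji--Umehara--Yamada and Fujimori--Saji--Umehara--Yamada criteria. Steps 1 and 3 are right in substance. However, the normal you use is wrong, and the decisive computation is only announced.

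\textbf{The normal.} The vector $\nu=(1-|g|^2)f^{-1}N=2\Re(g)e_1+2\Im(g)e_2+(1+|g|^2)e_3$ is $g_+$-orthogonal to $f^{-1}df$, not $g_R$-orthogonal, and rescaling cannot change that. Worse, $g_+(\nu,\nu)=(1-|g|^2)^2$. So on $\Sigma$ it is a $g_+$-null vector $g_+$-orthogonal to the null line $f^{-1}df(T_zU)$, and hence it lies \emph{in} that line. Used literally, it gives $\lambda\propto(1-|g|^2)^3$, so every singular point would look degenerate, and $\det(\hat c',\nu,\cdot)\equiv 0$ along $c$; Steps 2 and 4 then collapse. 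The $g_R$-normal is obtained by flipping the timelike component, $\tilde\nu=-2\Re(g)e_1+2\Im(g)e_2+(1+|g|^2)e_3$, which is nowhere zero since $1+|g|^2\neq0$. With $\hat\nu=\tilde\nu/|\tilde\nu|$, your formula $\lambda=(1-|g|^2)\mu$ and Step 2 are correct.

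\textbf{Step 4.} This is the entire content of (2)--(5), and you only state the identity you hope for. The identity is true, and it needs no harmonic map equation, since only first derivatives of $g$ enter. In null coordinates $z=u\epsilon_++v\epsilon_-$ with $\epsilon_\pm=(1\pm j)/2$, write $g=a\epsilon_++b\epsilon_-$. Then:
\begin{itemize}
\item $|g|^2=ab$ and $\hat\omega=(-b_u\epsilon_++a_v\epsilon_-)/(1+ab)^2$;
\item $f^{-1}f_u=\hat\omega_+\Psi_+/H$ and $f^{-1}f_v=\hat\omega_-\Psi_-/H$, where $\Psi_+=(a^2+1,a^2-1,2a)$ and $\Psi_-=(b^2+1,1-b^2,2b)$;
\item $\Psi_+\times\Psi_-=2(1-ab)\tilde\nu$ (Euclidean cross product), with $\tilde\nu=(-(a+b),a-b,1+ab)$.
\end{itemize}
On $\Sigma=\{ab=1\}$ one has $\Psi_-=b^2\Psi_+$, $\xi=b^2a_v\partial_u+b_u\partial_v$ and $df_L(\xi)=-(b\,a_vb_u/H)\bm{e}_3$. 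For $c'=\mu\,((ab)_v,-(ab)_u)$, a direct computation gives
\[
\det\bigl(\Psi_+,\tilde\nu,d\tilde\nu(\xi)\bigr)=\frac{2(1+b^2)^2}{b^4}\,\Delta,\qquad
\langle c_L',\bm{e}_3\rangle=-\frac{\mu}{2Hb^2}\,\Delta,\qquad
\Delta:=b_ub_v-b^4a_ua_v .
\]
By Maurer--Cartan together with $f^{-1}df(\xi)=0$, $d\hat\nu(\xi)$ is $g_R$-orthogonal to both $\hat\nu$ and $\Psi_+$. Hence $f$ is a front at $z$ iff $\Delta\neq0$ iff $\langle c_L',\bm{e}_3\rangle\neq0$, which is (2).

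You must test against the generator $\Psi_+$ of the image of $df$, not against $\hat c'$. Your ``equivalently $\det(\hat c',\nu,\xi\nu)$'' fails exactly where $\xi\parallel c'$: there $\hat c'=0$ although $\langle c_L',\bm{e}_3\rangle\neq0$, i.e.\ at the swallowtails of (4).

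For (5), $f^{-1}\hat c'=-\det(c',\xi)\Psi_+/(4H)$. So $\psi$ is $\det(c',\xi)$ times a non-vanishing function times $\Delta$, hence a non-vanishing multiple of $\langle c_L',\bm{e}_3\rangle$ wherever $\xi\pitchfork c'$; that is what Step 5 needs. You should also justify applying the Euclidean FSUY criterion to left-translated vectors. In the coordinate chart the correction terms are proportional to $df(\xi)=0$, and the change of frame only multiplies $\psi$ by a non-vanishing factor.

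\textbf{Swallowtail.} In Step 3, ``$\det(c',\xi)$ has a simple zero'' translates into $c_L'\parallel\bm{e}_3$ and $\det(c_L'',\bm{e}_3,N_L)\neq0$, not directly into $c_L''\nparallel\bm{e}_3$. These agree only because $c_L''(0)$ is tangent to $f_L$. To see this, differentiate $\langle N_L\circ c,c_L'\rangle=0$ and use $c_L'(0)\parallel\bm{e}_3$ and $\langle N_L\circ c,\bm{e}_3\rangle\equiv0$; this gives $\langle c_L''(0),N_L\rangle=0$. Your proof needs to state this step.
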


\begin{remark}
The above criteria in Theorem \ref{thm:criteria2} using the curve $c_L=f_L \circ c$ on $f_L$ does not depend on parametrizations of the singular curve $c$ and does not require the normal Gauss map $g$. Therefore, such criteria are useful when the constant mean curvature surface in $\mathbb{L}^3$ corresponding to a timelike minimal surface in $\Nil(\tau)$ is known concretely.
\end{remark}

%%%%%%%%%%%%%%%%%%%%%%%%%%%%%%%%%%
\section{B-scroll type surfaces in $\Nil(\tau)$}\label{sec:3}

Let us consider a null curve $\gamma=\gamma(s)$ defined on an interval $I\subset \mathbb{R}$ with a null frame $(A,B,C)$ satisfying
\begin{equation}\label{eq:frame_cond}
A:=\gamma':=\frac{d\gamma}{ds},\quad C:=A\times B,\quad \langle B,B\rangle=0,\quad \langle A,B\rangle=-1,
\end{equation}
and the Frenet-Serret equations
\begin{equation}\label{eq:FS}
(A',B', C')=(A,B,C)\begin{pmatrix}
\kappa_1 & 0 & \kappa_3 \\
0 & -\kappa_1  & \kappa_2 \\
\kappa_2 & \kappa_3  & 0
\end{pmatrix},
\end{equation}
where $\kappa_1=\langle A,B'\rangle$, $\kappa_2=\langle A',C\rangle$ and $\kappa_3=\langle B',C\rangle$.

The surface 
\begin{equation}\label{eq:Bscroll}
f_L(s,t)=\gamma(s)+tB(s),\quad (s,t)\in I\times \mathbb{R}
\end{equation}
 is called a {\it null scroll} associated with the frame $(A,B,C)$, and it is called a {\it B-scroll} when $\kappa_1=0$. The notion of B-scrolls was originally introduced by Graves \cite{Graves}. 
 
 A straightforward calculation shows that the null scroll $f_L$ in \eqref{eq:Bscroll} is a timelike surface and has the spacelike unit normal vector field $N_L=-C-t\kappa_3 B$. Here, we take the negatively oriented normal vector filed to consider the duality in Fact \ref{fact:correspond}, see also \cite[Remark 2.7]{AK1}. 
Moreover, $f_L$ has the following first and second fundamental forms:
\[
\mathrm{I}=(2t\kappa_1+t^2\kappa_3^2)ds^2-2dsdt,\quad \mathrm{II}=-(\kappa_2+t\kappa_3'+t^2\kappa_3^3)ds^2-2\kappa_3dsdt.
\]
Hence, the mean curvature $H$ and the Gaussian curvature $K$ are also computed as
\[
H:=\frac{1}{2}\mathrm{tr}{(\mathrm{I}^{-1}\mathrm{II})}=\kappa_3,\quad
K:=\det{({\mathrm{I}^{-1}\mathrm{II}})}=\kappa_3^2.
\]

We see that $f_L$ has a constant mean curvature $H$ if and only if $\kappa_3=-H$.
Namely, any constant mean curvature $H$ null scroll has
\begin{itemize}
\item the Frenet-Serret equations
\begin{equation}\label{eq:FS_nullscroll}
(A',B', C')=(A,B,C)\begin{pmatrix}
 \kappa_1 & 0 & H  \\
0 & - \kappa_1  &  \kappa_2\\
 \kappa_2  & H & 0
\end{pmatrix},
\end{equation}
\item the first and second fundamental forms
\[
\mathrm{I}=(2t\kappa_1+t^2H^2)ds^2-2dsdt,\quad \mathrm{II}=-(\kappa_2+t^2H^3)ds^2-2Hdsdt,
\]
\item  the Gauss map 
\begin{equation}\label{eq:N_L}
N_L=-C-tHB
\end{equation}
which satisfies $\Box N_L=2H^2N_L$, where $\Box$ is the d'Alambert operator with respect to the metric $\mathrm{I}$, see \cite{AFLM} for example. 
\end{itemize}

We call a timelike minimal surface $f$ in $\Nil(H)$ associated with a constant mean curvature $H$ null scroll (resp. B-scroll) $f_L$ in $\mathbb{L}^3$ as in Fact \ref{fact:correspond} a {\it null scroll type surface} (resp. {\it B-scroll type surface}).

In the end of this section, we recall the representation formula for B-scroll type timelike minimal surfaces in $\Nil(H)$ with respect to $(A,B,C)$ proved in \cite[Section 3]{Kiyohara}.

\begin{fact}[\cite{Kiyohara}]\label{fact:correspond_Bscroll}
Let $f_L(s,t)=\gamma(s)+tB(s)$ be a B-scroll in $\mathbb{L}^3$ with constant mean curvature and $H$ defined by \eqref{eq:Bscroll} associated with a null frame of $(A,B,C)$. Then the corresponding B-scroll type timelike minimal surface $f$ in $\Nil(H)$ in Fact \ref{fact:correspond} is written as
\begin{equation}\label{eq:correspond_Bscroll}
\begin{pmatrix} f^1\\f^2\\f^3 \end{pmatrix}=
\begin{pmatrix} \gamma_1+tB_1\\ \gamma_2+tB_2\\ \gamma_3-tB_3+H\int (-\gamma_2 \gamma'_1+\gamma_1 \gamma'_2)ds+tH(\gamma_1B_2-\gamma_2B_1) \end{pmatrix},
\end{equation}
where $\gamma=(\gamma_1,\gamma_2,\gamma_3)$ and $B=(B_1,B_2,B_3)$.
\end{fact}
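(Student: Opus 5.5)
The plan is to compare the formula in Fact \ref{fact:correspond}(2) with that in Fact \ref{fact:correspond}(1) component by component, using the same normal Gauss map $g$ and $1$-form $\omega$. The first two integrands of $f$ and $f_L$ coincide, so $f^1=f_L^1=\gamma_1+tB_1$ and $f^2=f_L^2=\gamma_2+tB_2$ after fixing integration constants. This settles the first two rows of \eqref{eq:correspond_Bscroll}.

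For the third component I would split the integrand into two parts:
\[
df^3=\tfrac{2}{H}\Re(2g\,\omega)+H\bigl(f^1df^2-f^2df^1\bigr).
\]
The first part should be compared with
\[
df_L^3=\tfrac{2}{H}\Re(2jg\,\omega).
\]
The key observation is that for a paracomplex-valued $1$-form $X=\hat X\,dz$, the forms $\Re(X)$ and $\Re(jX)$ are related by the Hodge star of the Lorentzian conformal structure. In null coordinates, multiplication by $j$ flips the sign of exactly one null component. Hence $\tfrac{2}{H}\Re(2g\,\omega)=\pm\ast df_L^3$, and I would fix the sign using the orientation convention $N_L=-C-tHB$ (cf. \cite[Remark 2.7]{AK1}).

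To compute $\ast$, I would use the null coframe of $\mathrm{I}=t^2H^2ds^2-2dsdt$, namely
\[
\theta_1=ds,\qquad \theta_2=dt-\tfrac{t^2H^2}{2}ds,\qquad \mathrm{I}=-2\theta_1\theta_2.
\]
The star fixes $\theta_1$ and negates $\theta_2$. Writing
\[
df_L^3=(\gamma_3'+tB_3')ds+B_3dt=\bigl(\gamma_3'+tB_3'+\tfrac{t^2H^2}{2}B_3\bigr)\theta_1+B_3\theta_2,
\]
one gets
\[
\ast df_L^3=(\gamma_3'+tB_3'+t^2H^2B_3)\,ds-B_3\,dt.
\]

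Next I would expand the Heisenberg correction $H(f^1df^2-f^2df^1)$ with $f^k=\gamma_k+tB_k$. The $dt$-part is $tH(\gamma_1B_2-\gamma_2B_1)$ integrated in $t$. The $ds$-part is $H(\gamma_1\gamma_2'-\gamma_2\gamma_1')$ plus terms in $t$ and $t^2$ involving $B_k'$, $\gamma_k'=A_k$ and $B_k$.

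The main step is then to show that the total $1$-form equals
\[
d\Bigl(\gamma_3-tB_3+H\!\int(\gamma_1\gamma_2'-\gamma_2\gamma_1')ds+tH(\gamma_1B_2-\gamma_2B_1)\Bigr).
\]
For this I would use the Frenet–Serret equations \eqref{eq:FS_nullscroll} with $\kappa_1=0$, i.e. $A'=\kappa_2C$, $B'=HC$, $C'=\kappa_2A+HB$. I would also use the algebraic identities of the null frame in the first two coordinates, such as $A_1B_2-A_2B_1$ being expressible through $C_3$ via $C=A\times B$, and $B_1^2-B_2^2=B_3^2$.

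I expect the main obstacle to be the bookkeeping of the $t$- and $t^2$-coefficients of $ds$. For example, $t^2H^2B_3$ from the star has to cancel against $t^2H(B_1B_2'-B_2B_1')=t^2H^2(B_1C_2-B_2C_1)$, which should equal $\mp t^2H^2B_3$ by the Lorentzian cross product relations. Similarly, $tB_3'=tHC_3$ must combine with $tH(A_1B_2-A_2B_1)$, which again reduces to $C_3$, and with the derivative of $tH(\gamma_1B_2-\gamma_2B_1)$. These cancellations also determine the correct sign of $\ast$. I would therefore check them first on the $t^2$ terms, choosing the sign that makes them cancel, and then verify that this same choice yields the $-tB_3$ and $-B_3dt$ terms.
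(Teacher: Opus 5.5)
The paper gives no proof of this Fact. It is quoted from \cite{Kiyohara}, so I can only assess your argument on its own terms.

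Your strategy is sound, and the bookkeeping works out. With $C=A\times B$, using the convention $\langle x\times y,z\rangle=\det(x,y,z)$ (forced by $\det(A,B,C)=1$ in the proof of Lemma~\ref{lem:B_construction}), one has $A_1B_2-A_2B_1=C_3$ and $B_1C_2-B_2C_1=-B_3$, since $B\times C=-B$. With your choice $\star\theta_1=\theta_1$, $\star\theta_2=-\theta_2$, the form $\star df_L^3+H(f^1df^2-f^2df^1)$ has $ds$-coefficient $A_3+H(\gamma_1\gamma_2'-\gamma_2\gamma_1')+tH^2(\gamma_1C_2-\gamma_2C_1)$ and $dt$-coefficient $-B_3+H(\gamma_1B_2-\gamma_2B_1)$. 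These are exactly the partial derivatives of the claimed $f^3$. One slip: $C'=HA+\kappa_2B$, not $\kappa_2A+HB$. This is harmless, since only $\gamma'=A$ and $B'=HC$ enter the computation.

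The one genuine gap is how you fix the sign of $\star$. Choosing it so that the $t^2$-terms cancel is circular, because that cancellation is part of what you are proving. You need an independent reason, and there are two easy ones.

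(i) Closedness. We have $d(\star df_L^3)=-2H(C_3+tHB_3)\,ds\wedge dt$ and $d\bigl(H(f^1df^2-f^2df^1)\bigr)=2H\,df^1\wedge df^2=2H(C_3+tHB_3)\,ds\wedge dt$. With the opposite sign, the $1$-form therefore has exterior derivative $4H(C_3+tHB_3)\,ds\wedge dt=-4H\langle N_L,\bm{e}_3\rangle\,ds\wedge dt$. This is nonzero off the curve $\Sigma$, so that form cannot be $df^3$ for the function $f^3$ produced by Fact~\ref{fact:correspond}.

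(ii) Orientation, made explicit. Write $z=ue_++ve_-$ and $g=pe_++qe_-$ with $e_\pm=(1\pm j)/2$. Then Fact~\ref{fact:correspond}(1) gives $(f_L)_u\parallel\ell_+:=(1+p^2,p^2-1,2p)$ and $(f_L)_v\parallel\ell_-:=(1+q^2,1-q^2,-2q)$. One checks $\ell_+\times\ell_-=-2(1+pq)^2N_L$, hence $N_L\times\ell_\pm=\pm\ell_\pm$. Since $N_L\times B=B\times C=-B$, the rulings are $v$-lines, so $u=u(s)$ and $du\propto ds$. Finally, $\Re X=\star\,\Re(jX)$ holds for the star with $\star du=du$, $\star dv=-dv$, applied to $X=\frac{4}{H}g\,\omega$. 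That is exactly your choice $\star\theta_1=\theta_1$.

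Either argument closes the gap, and the rest of your proof then goes through as written.
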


\section{Singularities on null scroll type timelike minimal surfaces in $\Nil(\tau)$}\label{sec:4}
By using Theorem \ref{thm:criteria2}, we can obtain the following criteria.

\begin{theorem}\label{thm:criteria_Bscroll}
Let $f_L$ and $f$ be a null scroll with constant mean curvature $H$ in $\mathbb{L}^3$ written as \eqref{eq:Bscroll} and its corresponding null scroll type surface in $\Nil(H)$.
For a point $(s,t)\in I \times \mathbb{R}$, the following assertions hold.
\begin{itemize}
\item[(1)] $(s,t)$ is a singular point of $f$ if and only if $t= -\frac{C_3(s)}{HB_3(s)}$. Furthermore, any such a singular point $(s,t)$ is non-degenerate.
\item[(2)] $f$ is a front at a singular point $(s,t)$ if and only if $\kappa_2(s)  \neq 0$,
\item[(3)] $f$ is $\mathcal{A}$-equivalent to the cuspidal cross cap at  a singular point $(s,t)$ if and only if 
\[
\kappa_2(s)=0\quad \text{and} \quad \kappa_2'(s)=0.
\]
\end{itemize}\end{theorem}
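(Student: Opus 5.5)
The plan is to apply Theorem \ref{thm:criteria2} directly to the null scroll $f_L(s,t)=\gamma(s)+tB(s)$, using the Gauss map $N_L=-C-tHB$ from \eqref{eq:N_L} and the Frenet-Serret equations \eqref{eq:FS_nullscroll}. Reading off the columns of \eqref{eq:FS_nullscroll}, these are
\[
A'=\kappa_1A+\kappa_2C,\qquad B'=-\kappa_1B+HC,\qquad C'=HA+\kappa_2B.
\]
Throughout, $\langle X,\bm{e}_3\rangle=X_3$ denotes the third component. Since $f_L$ is immersed everywhere, Theorem \ref{thm:criteria2} applies at every point.

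\emph{Item (1).} By Theorem \ref{thm:criteria2}(0), $(s,t)$ is singular if and only if
\[
\langle N_L,\bm{e}_3\rangle=-(C_3+tHB_3)=0 .
\]
First I show that $B_3$ never vanishes at a singular point. Suppose $B_3(s)=0$. Then singularity forces $C_3(s)=0$, so $\bm{e}_3$ is orthogonal to both $B$ and $C$. The orthogonal complement of $\mathrm{span}\{B,C\}$ is spanned by the null vector $B$, so $\bm{e}_3$ would be null, which is absurd. Hence $B_3\neq0$ at singular points, and the singular set is exactly $t=-C_3/(HB_3)$. For non-degeneracy, note that $\partial_t\langle N_L,\bm{e}_3\rangle=-HB_3\neq0$. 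So $\langle dN_L,\bm{e}_3\rangle\neq0$, and Theorem \ref{thm:criteria2}(1) gives non-degeneracy.

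\emph{Key identity.} Parametrize the singular curve as $c(s)=(s,t(s))$ with $t(s)=-C_3/(HB_3)$, and put $c_L=\gamma+t(s)B$. Then
\[
c_L'=A+t'B+t(-\kappa_1B+HC).
\]
Now differentiate the relation $C_3+tHB_3=0$ and substitute the frame equations above. This gives
\[
HA_3+\kappa_2B_3+t'HB_3+tH(-\kappa_1B_3+HC_3)=0 .
\]
The terms other than $\kappa_2B_3$ are exactly $H\langle c_L',\bm{e}_3\rangle$, so
\[
\langle c_L',\bm{e}_3\rangle=-\frac{\kappa_2B_3}{H}.
\]
This identity holds along the whole singular curve. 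Since $B_3\neq0$ there, Theorem \ref{thm:criteria2}(2) immediately gives item (2): $f$ is a front at a singular point if and only if $\kappa_2(s)\neq0$.

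\emph{Item (3).} Differentiating the key identity once more along the curve gives
\[
\langle c_L'',\bm{e}_3\rangle=-\frac{\kappa_2'B_3+\kappa_2B_3'}{H}.
\]
At a point where $\kappa_2=0$, this reduces to $-\kappa_2'B_3/H$. Next I check the condition $c_L'\nparallel\bm{e}_3$ of Theorem \ref{thm:criteria2}(5). The vector $c_L'$ is nonzero because $f_L$ is an immersion and $c$ is regular. When $\kappa_2=0$ it is orthogonal to the spacelike vector $\bm{e}_3$, so it cannot be parallel to $\bm{e}_3$, and this condition is automatic. Consequently, the three conditions of Theorem \ref{thm:criteria2}(5) collapse to two conditions on $\kappa_2$ and $\kappa_2'$ at $s$: the vanishing of $\kappa_2(s)$, and a condition on $\kappa_2'(s)$ coming from $\langle c_L'',\bm{e}_3\rangle=-\kappa_2'B_3/H$.

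I expect the main obstacle to be this last step, namely correctly matching the $\kappa_2'$ condition with the wording of item (3). The criterion in Theorem \ref{thm:criteria2}(5) requires $\langle c_L'',\bm{e}_3\rangle\neq0$, and the computation translates this literally into $\kappa_2'(s)\neq0$. So this computation yields item (3) in the form $\kappa_2(s)=0$ and $\kappa_2'(s)\neq0$, which is the form used in Theorem \ref{thm:Intro}(3) via $\kappa_2=-S(h)/H$. It does not yield the condition $\kappa_2'(s)=0$ as printed in item (3); I do not see how to obtain that version from Theorem \ref{thm:criteria2}(5).

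A secondary point needing care is the sign convention relating $\kappa_3$ and $H$. I will simply use \eqref{eq:FS_nullscroll} as given, and the argument does not depend on that sign.
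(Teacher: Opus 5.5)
Your proof is correct and follows the paper's route. You apply Theorem \ref{thm:criteria2} with $\langle N_L,\bm{e}_3\rangle=-C_3-tHB_3$, show $B_3\neq0$ on the singular set, and obtain $\langle c_L',\bm{e}_3\rangle=-\kappa_2B_3/H$ and, where $\kappa_2=0$, $\langle c_L'',\bm{e}_3\rangle=-\kappa_2'B_3/H$. Your differences are minor streamlinings. Your null-complement argument for $B_3\neq0$ replaces the paper's componentwise cross-product argument. Your derivation of the key identity by differentiating $C_3+tHB_3=0$ replaces the explicit formula for $c_L'$. Your explicit check that $c_L'\nparallel\bm{e}_3$ fills in a point the paper leaves implicit.

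You are also right that the printed condition $\kappa_2'(s)=0$ in item (3) is a misprint. The paper's own proof gives $\kappa_2(s)=0$ and $\kappa_2'(s)\neq0$, which is the form used in Theorem \ref{thm:Intro}(3) and Theorem \ref{thm:Schwarzian_criterion}(3).
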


By (1) of Theorem \ref{thm:criteria_Bscroll}, the singular set $\Sigma$ of $f$ is written as
\[
\Sigma=\{(s,t)\in I\times \mathbb{R}\mid t=-{C_3(s)}/{HB_3(s)}\}.
\]
In particular, the surface $f$ in $\Nil(H)$ has always singularities.

\begin{proof} 
Let us put $A=(A_1, A_2, A_3)$, $B=(B_1, B_2, B_3)$ and $C=(C_1, C_2, C_3)$.
By the assertion (0) in Theorem \ref{thm:criteria2}, the singular set $\Sigma$ is characterized by the equation $\langle N_L, \bm{e}_3\rangle =0$ and hence \eqref{eq:N_L} yields the relation
\begin{equation}\label{eq:singur_relation}
\langle N_L, \bm{e}_3\rangle = -C_3-tHB_3=0
\end{equation}
on $\Sigma$. Here, we can see that $B_3\neq 0$ on $\Sigma$. In fact, if we assume $B_3=0$ on a point in $\Sigma$, then $C_3=0$ holds at the same point by \eqref{eq:singur_relation} and hence there is a real constant $k$ such that
\[
\begin{pmatrix}
A_1  \\
A_2 
\end{pmatrix}
=k
\begin{pmatrix}
B_1  \\
B_2 
\end{pmatrix}
\] 
by the relations $C=A\times B$ and $C_3=0$. This with $B_3=0$ implies that two lightlike vector fileds $A$ and $B$ are parallel at this point, but it contradicts to $\langle A,B\rangle=-1$.
Therefore, by \eqref{eq:singur_relation}, the set $\Sigma$ is written as the image of the singular curve $c$ defined by
\[
c(s):=(s,t(s)) = \left(s, -\frac{C_3(s)}{HB_3(s)}\right).
\]

Next, we check non-degeneracy of singular points. By \eqref{eq:FS_Bscroll} and \eqref{eq:N_L},  $dN_L$ is computed as

\begin{align*}
dN_L&=-dC-Hd(tB)\\ 
&=-(HA+\kappa_2B)ds-H\left[Bdt +t(-\kappa_1B+HC)ds\right]\\
&=(-HA-\kappa_2B+tH\kappa_1B-tH^2C)ds-HBdt.
\end{align*}

Since $B_3\neq 0$ on $\Sigma$, then the above computation shows that $\langle dN_L,e_3\rangle \neq 0$ on $\Sigma$, which implies that any singular point on $\Sigma$ is non-degenerate by the assertion (1) in Theorem \ref{thm:criteria2}.

Next, we give a front condition. Let us consider the curve $c_L:=f_L \circ c$ on the surface $f_L$. By \eqref{eq:Bscroll} and \eqref{eq:FS_Bscroll}, the derivative $c_L'$ is computed as follows:
\begin{align} \label{eq:cL}
c_L' &= \gamma'-\left(\frac{C_3}{HB_3}\right)'B-\frac{C_3}{HB_3}B' \notag \\
&=A-\frac{1}{H}\left(\frac{C_3'B_3-C_3B_3'}{B_3^2}\right)B-\frac{C_3}{HB_3}(-\kappa_1B+HC) \notag \\
&=A-\frac{1}{H}\left[\frac{(HA_3+\kappa_2B_3)B_3-C_3(-\kappa_1B_3+HC_3)}{B_3^2}\right]B+\frac{\kappa_1}{H}\frac{C_3}{B_3}B-\frac{C_3}{B_3}C \notag \\
&=A+\left(-\frac{A_3}{B_3}-\frac{\kappa_2}{H}+\frac{C_3^2}{B_3^2} \right)B-\frac{C_3}{B_3}C.
\end{align}
Hence, we obtain 
\[
\langle c_L', \bm{e}_3\rangle =- \frac{\kappa_2B_3}{H}. 
\]
Since $B_3\neq 0$ on $\Sigma$, the surface $f$ in $\Nil(H)$ is a front at a singular point $(s, t(s))$ in $\Sigma$ if and only if $\kappa_2(s)\neq 0$.
The assertion (2) is proved.

Finally, if we assume that $f$ is not a front at a singular point $(s, t(s))$ in $\Sigma$, then $\kappa_2(s)=0$. Under this assumption,
\[
\langle c_L'', \bm{e}_3\rangle = -\frac{\kappa_2'B_3}{H}
\]
holds. By using the assertion (5) in Theorem \ref{thm:criteria2}, we have the assertion (3).
\end{proof}

%%%%%%%%%%%%%%%%%%%%%%%%%%%%%%%%%%

\subsection{Geometric meaning of $\kappa_2$ and construction of B-scroll type surfaces}

In the Frenet-Serret equations \eqref{eq:FS_nullscroll}, we may take a parameter $s$, called a {\it distinguished parameter}, such that $\kappa_1=0$ without changing $H$. See \cite[pages 58--59]{DB} for more details.

From now on, we focus on B-scrolls in $\mathbb{L}^3$ with constant mean curvature $H$ and corresponding B-scroll type minimal surfaces in $\Nil(H)$. For the null frame $(A,B,C)$ adopted to such a B-scroll, the Frenet-Serret equations \eqref{eq:FS_nullscroll} become

\begin{equation}\label{eq:FS_Bscroll}
(A',B', C')=(A,B,C)\begin{pmatrix}
0 & 0 & H  \\
0 & 0  &  \kappa_2\\
 \kappa_2  & H  & 0
\end{pmatrix}.
\end{equation}

By Theorem \ref{thm:criteria_Bscroll}, we can see that for a given smooth function $\kappa_2$ and a constant $H\neq 0$, solving \eqref{eq:FS_Bscroll} yields a B-scroll $f_L$ in $\mathbb{L}^3$ with constant mean curvature $H$ and the corresponding B-scroll type surface $f$ in $\Nil(H)$ with singularities. 
However, solving the equations \eqref{eq:FS_Bscroll} becomes complicated for a specific function $\kappa_2$. Therefore, in this section, we will describe a method for constructing such a null scroll type surface without solving the equations \eqref{eq:FS_Bscroll} and the geometrical meaning of $\kappa_2$ that is revealed by this method.

First, we see that a null frame $(A,B,C)$ satisfying \eqref{eq:FS_Bscroll} is controlled only by the lightlike vector field $B$ as follows.

\begin{lemma}\label{lem:B_construction}
Let $I\subset \mathbb{R}$ be an interval and $H$ be a non-zero constant. 
Any null frame $(A,B,C)$ defined on $I$ satisfying \eqref{eq:frame_cond} and \eqref{eq:FS_Bscroll} has the following relations.
\begin{equation}\label{eq:B_rep}
A=-\frac{\kappa_2}{H}B+\frac{1}{H^2}B'',\quad C=\frac{1}{H}B',\quad \kappa_2=-\frac{1}{2H^3}\langle B'',B''\rangle.
\end{equation}

Conversely, if $B=B(s)$ is a non-vanishing lightlike vector field defined on $I$ satisfying
\begin{equation}\label{eq:B_cond}
  \langle B', B'\rangle=H^2,\quad H\det{(B, B', B'')}>0, 
 \end{equation}
then the null frame $(A,B,C)$ defined by the relations
\begin{equation}\label{eq:ABC}
A:=-\frac{\kappa_2}{H}B+\frac{1}{H^2}B'',\quad C:=\frac{1}{H}B' 
\end{equation}
gives a solution to \eqref{eq:FS_Bscroll} with $\kappa_2:=-\dfrac{1}{2H^3}\langle B'',B''\rangle$.
\end{lemma}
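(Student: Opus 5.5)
The forward direction is read off the Frenet--Serret system \eqref{eq:FS_Bscroll} column by column, and the converse is a direct verification. Written out, \eqref{eq:FS_Bscroll} says
\[
A'=\kappa_2 C,\qquad B'=HC,\qquad C'=HA+\kappa_2 B .
\]
\textbf{Forward direction.} Since $H\neq 0$, the second equation gives $C=\frac1H B'$. Differentiating once more and using the third equation gives $B''=HC'=H^2A+H\kappa_2 B$, which solves to $A=-\frac{\kappa_2}{H}B+\frac{1}{H^2}B''$. Finally, using $\langle A,A\rangle=\langle B,B\rangle=0$ and $\langle A,B\rangle=-1$ from \eqref{eq:frame_cond},
\[
\langle B'',B''\rangle=2H^3\kappa_2\langle A,B\rangle=-2H^3\kappa_2 .
\]
This is the formula for $\kappa_2$ in \eqref{eq:B_rep}.

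\textbf{Converse: inner products.} Let $B$ be lightlike and satisfy \eqref{eq:B_cond}. First I collect the inner products obtained by differentiating the constraints:
\begin{itemize}
\item from $\langle B,B\rangle=0$: $\langle B,B'\rangle=0$, and differentiating again, $\langle B,B''\rangle=-\langle B',B'\rangle=-H^2$;
\item from $\langle B',B'\rangle=H^2$: $\langle B',B''\rangle=0$.
\end{itemize}
Now define $A$, $C$ by \eqref{eq:ABC} and $\kappa_2:=-\frac{1}{2H^3}\langle B'',B''\rangle$. Then:
\begin{itemize}
\item $\langle A,B\rangle=\frac{1}{H^2}\langle B'',B\rangle=-1$;
\item $\langle A,A\rangle=-\frac{2\kappa_2}{H^3}\langle B,B''\rangle+\frac{1}{H^4}\langle B'',B''\rangle=\frac{2\kappa_2}{H}-\frac{2\kappa_2}{H}=0$;
\item $\langle C,C\rangle=1$;
\item $\langle C,B\rangle=0$ and $\langle C,A\rangle=\frac{1}{H^3}\langle B',B''\rangle=0$.
\end{itemize}

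\textbf{Converse: orientation.} Next I must check $C=A\times B$. Since $A\times B$ is orthogonal to both $A$ and $B$, and its squared length is $\pm\langle A,B\rangle^2=\pm1$ depending on the sign convention for the Lorentzian cross product, it equals $\pm C$. The correct sign should be fixed by the hypothesis $H\det(B,B',B'')>0$. Indeed, $\det(A,B,C)$ equals $\langle A\times B,C\rangle$ up to the convention's sign. Substituting \eqref{eq:ABC} and using multilinearity (the $B$-term in $A$ drops out),
\[
\det(A,B,C)=\frac{1}{H^3}\det(B'',B,B')=\frac{1}{H^3}\det(B,B',B'').
\]
Its sign is therefore that of $H\det(B,B',B'')$, which is positive by assumption. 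I expect this sign bookkeeping, matching the paper's convention for $\times$ on $\mathbb{L}^3$ with signature $(-++)$, to be the only delicate point.

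\textbf{Converse: Frenet--Serret equations.} For a null frame with $\langle A,B\rangle=-1$, any vector $X$ decomposes as
\[
X=-\langle X,B\rangle A-\langle X,A\rangle B+\langle X,C\rangle C .
\]
The equations for $B'$ and $C'$ hold by construction: $B'=HC$ is the definition of $C$, and $C'=\frac1H B''=HA+\kappa_2 B$ is the definition of $A$. For $A'$ I use the decomposition. The coefficients are
\begin{itemize}
\item $-\langle A',B\rangle=\langle A,B'\rangle=H\langle A,C\rangle=0$, using $\langle A,B\rangle=-1$ constant;
\item $-\langle A',A\rangle=0$, since $A$ is lightlike;
\item $\langle A',C\rangle=-\langle A,C'\rangle=-\langle A,HA+\kappa_2B\rangle=\kappa_2$.
\end{itemize}
Hence $A'=\kappa_2 C$, and \eqref{eq:FS_Bscroll} holds with $\kappa_1=0$. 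Consequently $\gamma=\int A\,ds$ is a null curve with the required frame.
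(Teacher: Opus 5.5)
Your proof is correct and follows the paper's argument: the forward direction is read off from \eqref{eq:FS_Bscroll} via $C=\frac1H B'$ and $B''=H^2A+H\kappa_2B$, and the converse checks the null-frame inner products, then fixes the orientation through $\det(A,B,C)=H^{-3}\det(B,B',B'')$, then verifies the Frenet--Serret equations. For the orientation step you rely on the same implicit convention $\langle X\times Y,Z\rangle=\det(X,Y,Z)$ as the paper. The only cosmetic difference is the check of $A'=\kappa_2C$: you decompose $A'$ directly in the frame, while the paper expands $A'$ and uses $B'''=H\kappa_2'B+2H^2\kappa_2C$, which is the same frame-decomposition computation applied to $B'''$.
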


\begin{remark}
In the converse part of Lemma \ref{lem:B_construction}, as we will see the following proof, the second condition of \eqref{eq:B_cond} corresponds to the condition $A\times B =C$ for $A$ and $B$ defined by \eqref{eq:ABC}. Then we can not change $B$ to $-B$.
\end{remark}

\begin{proof}
By \eqref{eq:frame_cond} and \eqref{eq:FS_Bscroll}, we have
\begin{equation}\label{eq:B'B''}
B'=HC,\quad B''=H^2A+H\kappa_2B,
\end{equation}
which gives the relations \eqref{eq:B_rep}.

Conversely, let us take a lightlike vector field $B$ satisfying \eqref{eq:B_cond} and the vector field $A, C$ defined by \eqref{eq:ABC}. We first check that $(A,B,C)$ gives a null frame. By using \eqref{eq:B_cond}, \eqref{eq:ABC} and the definition $\kappa_2:={\langle B'',B''\rangle}/{2H^3}$, we have
\begin{align*}
\langle A,B\rangle&= \left\langle -\frac{\kappa_2}{H}B+\frac{1}{H^2}B'',B\right\rangle = \frac{1}{H^2}\langle B'',B\rangle =-\frac{1}{H^2}\langle B',B'\rangle = -1,\\
\langle A,C\rangle &= \left\langle -\frac{\kappa_2}{H}B+\frac{1}{H^2}B'', \frac{1}{H}B'\right\rangle = 0,\\
\langle A,A\rangle &= \left\langle -\frac{\kappa_2}{H}B+\frac{1}{H^2}B'', -\frac{\kappa_2}{H}B+\frac{1}{H^2}B''\right\rangle =-2\frac{\kappa_2}{H^3}\langle B,B'' \rangle +\frac{1}{H^4}\langle B'',B'' \rangle\\
 &=2\frac{\kappa_2}{H^3}\langle B',B' \rangle +\frac{1}{H^4}(-2H^3\kappa_2)
 =2\frac{\kappa_2}{H^3}H^2 +\frac{1}{H^4}(-2H^3\kappa_2) =0,\\
\langle C,C\rangle&=\frac{1}{H^2}\langle B',B'\rangle =\frac{H^2}{H^2}=1.
\end{align*}
The above relations with $\langle B,B \rangle=0$ proves that $(A,B,C)$ gives a null frame. In particular, $\det{(A,B,C)} =\pm 1$ holds. Since \eqref{eq:B'B''} also holds by \eqref{eq:ABC}, 
\[
\det{(B,B',B'')}=\det{(B,HC,H^2A)}=H^3\det{(A,B,C)}.
\]
By comparing the above equation with the second relation \eqref{eq:B_cond}, we obtain $\det{(A,B,C)} =1$ and hence $\det{(B,B',B'')}=H^3$.
Therefore, we can easily check that the relation $B''\times B=H^2C$ and
\[
A\times B =\left(-\frac{\kappa_2}{H}B+\frac{1}{H^2}B''\right) \times B = \frac{1}{H^2}B''\times B = \frac{1}{H^2}(H^2C) = C.
\]

Finally, let us check $(A,B,C)$ satisfies the Frenet-Serret formulas \eqref{eq:FS_Bscroll}. 
By \eqref{eq:ABC}, 
\begin{align*}
B'&=HC,\\
C'&=\frac{1}{H}B''=\frac{1}{H}\left( H^2A+H\kappa_2B\right) =HA+\kappa_2B
\end{align*}
hold. On the other hand, $A'$ is also computed as
\begin{equation}\label{eq:A'}
A'=\left( -\frac{\kappa_2}{H}B+\frac{1}{H^2}B''\right)'
=-\frac{\kappa_2'}{H}B-\frac{\kappa_2}{H}B'+\frac{1}{H^2}B'''
=-\frac{\kappa_2'}{H}B-\kappa_2C+\frac{1}{H^2}B'''.
\end{equation}

Using \eqref{eq:B_cond} and $\kappa_2 := -\dfrac{1}{2H^3}\langle B'', B'' \rangle$, we obtain $B''' = H \kappa_2' B + 2H^2 \kappa_2 C$. Substituting this into \eqref{eq:A'}, we find that $A' = \kappa_2 C$. Hence, $(A, B, C)$ satisfies \eqref{eq:FS_Bscroll}.
\end{proof}

Next, to give a geometric meaning of $\kappa_2$, we recall the theory of null curves and the fundamental theorem of them.

%%%%%%%%%%%%

A regular curve $\gamma=\gamma(s)\colon I\rightarrow \mathbb{L}^{3}$ is called a {\it null curve} if 
	\[
		\langle \gamma', \gamma' \rangle=0, \quad  \text{$'$ denotes  $\frac{d}{ds}$}
	\]
and $\gamma$ is said to be {\it non-degenerate} if $\gamma'$ and $\gamma''$ are linearly independent at each point on $I$. 
For a non-degenerate null curve $\gamma(s)$, we can normalize the parameter so that 
	\begin{equation}\label{eq:psudo-arc}
		\langle \gamma''(s), \gamma''(s)\rangle=H^2,\quad  \text{for a non-zero constant $H$.}
	\end{equation}
The symbol $H$ is employed here so that it can later be identified with the mean curvature of null scrolls. When $H=1$,  a parameter $s$ satisfying \eqref{eq:psudo-arc} is called the {\it pseudo-arclength parameter}, see \cite{Bonnor,Vessiot} for more details.

From now on, let $s$ be a parameter satisfying \eqref{eq:psudo-arc}.
If we take the vector fields
	\[
		\vect{\sigma}(s) := \gamma'(s),\quad
		\vect{e}(s) := \gamma''(s)/H,
	\]
and then there is a unique lightlike vector field $\vect{n}$ such that
	\[
		\langle \vect{n}, \vect{\sigma} \rangle=-1,\quad
		\langle\vect{n}, \vect{e}\rangle=0.
	\]
If we set the {\it lightlike curvature} (see \cite[p.47]{IL}) of $\gamma$ to be
	\begin{equation}\label{eq:nullcurvature}
		\kappa_\gamma(s) := \inner{\vect{e}'(s)}{\vect{n}(s)},
	\end{equation}
we get
	\[
		 \vect{n}' = -\kappa_\gamma \vect{e},\qquad
		\vect{e}'= -\kappa_\gamma \vect{\sigma} +H\vect{n}.
	\]
Therefore, we obtain the following Frenet-Serret type formula for non-degenerate null curves.

\begin{proposition}[cf.~\cite{IL, Lopez}]\label{prop:Frenet}
	For a non-degenerate null curve $\gamma$ parametrized by pseudo-arclength parameter, the null frame $\mathcal{F}:=\{\vect{\sigma}, \vect{n},  \vect{e}\}$ satisfies
		\[
			\mathcal{F}' = \mathcal{F}
				\begin{pmatrix}
					0 & 0 & -\kappa_\gamma \\
					0 & 0 & H \\
					H & -\kappa_\gamma & 0
				\end{pmatrix}.
		\]
	Moreover, the lightlike curvature $\kappa_\gamma$ of $\gamma$ satisfies
		\begin{equation}\label{eq:pseudo_arc}
			2H^3\kappa_\gamma =\langle \gamma''', \gamma'''\rangle.
		\end{equation}
\end{proposition}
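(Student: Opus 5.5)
The plan is to derive every formula from one expansion rule in the frame $\mathcal{F}=\{\vect{\sigma},\vect{n},\vect{e}\}$, and then to obtain \eqref{eq:pseudo_arc} by computing $\gamma'''$ in that frame.

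First I will check that $\mathcal{F}$ is a null frame with the expected inner products:
\[
\langle\vect{\sigma},\vect{\sigma}\rangle=\langle\vect{n},\vect{n}\rangle=0,\quad \langle\vect{\sigma},\vect{n}\rangle=-1,\quad \langle\vect{e},\vect{e}\rangle=1,\quad \langle\vect{\sigma},\vect{e}\rangle=\langle\vect{n},\vect{e}\rangle=0.
\]
Most of these hold by construction. The normalization \eqref{eq:psudo-arc} gives $\langle\vect{e},\vect{e}\rangle=1$. Differentiating $\langle\gamma',\gamma'\rangle=0$ gives $\langle\vect{\sigma},\vect{e}\rangle=0$.

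From these inner products, any vector field $X$ along $\gamma$ decomposes as
\[
X=-\langle X,\vect{n}\rangle\vect{\sigma}-\langle X,\vect{\sigma}\rangle\vect{n}+\langle X,\vect{e}\rangle\vect{e}.
\]
I will apply this rule to the derivatives of the frame vectors. The needed coefficients come from differentiating the constant inner products above.

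The three derivatives go as follows.
\begin{itemize}
\item For $\vect{\sigma}$: by definition $\vect{\sigma}'=\gamma''=H\vect{e}$.
\item For $\vect{n}$: we have $\langle\vect{n}',\vect{n}\rangle=0$ and $\langle\vect{n}',\vect{\sigma}\rangle=-\langle\vect{n},\vect{\sigma}'\rangle=-H\langle\vect{n},\vect{e}\rangle=0$. Also $\langle\vect{n}',\vect{e}\rangle=-\langle\vect{n},\vect{e}'\rangle=-\kappa_\gamma$ by \eqref{eq:nullcurvature}. Hence $\vect{n}'=-\kappa_\gamma\vect{e}$.
\item For $\vect{e}$: we have $\langle\vect{e}',\vect{e}\rangle=0$ and $\langle\vect{e}',\vect{\sigma}\rangle=-\langle\vect{e},\vect{\sigma}'\rangle=-H$. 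Also $\langle\vect{e}',\vect{n}\rangle=\kappa_\gamma$. Hence $\vect{e}'=-\kappa_\gamma\vect{\sigma}+H\vect{n}$.
\end{itemize}
Writing these three formulas as columns gives the stated coefficient matrix.

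For \eqref{eq:pseudo_arc}, I will use $\gamma'''=(H\vect{e})'=H\vect{e}'=H(-\kappa_\gamma\vect{\sigma}+H\vect{n})$. Since $\vect{\sigma}$ and $\vect{n}$ are lightlike with $\langle\vect{\sigma},\vect{n}\rangle=-1$, this gives
\[
\langle\gamma''',\gamma'''\rangle=H^2\cdot(-2\kappa_\gamma H\langle\vect{\sigma},\vect{n}\rangle)=2H^3\kappa_\gamma.
\]

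There is no real obstacle here. The only point needing care is sign bookkeeping in the Lorentzian expansion rule, in particular the minus signs that come from $\langle\vect{\sigma},\vect{n}\rangle=-1$.
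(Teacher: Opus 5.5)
Your proof is correct and follows the same route as the paper. The paper only records $\vect{n}'=-\kappa_\gamma\vect{e}$ and $\vect{e}'=-\kappa_\gamma\vect{\sigma}+H\vect{n}$ from the definition \eqref{eq:nullcurvature} and the null-frame inner products, defers the details to the cited references, and does not derive $2H^3\kappa_\gamma=\langle\gamma''',\gamma'''\rangle$ explicitly; your expansion rule supplies those omitted computations, and the identity follows from $\gamma'''=H\vect{e}'$ exactly as you show.
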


By Proposition \ref{prop:Frenet}, we obtain the following fundamental theorem for non-degenerate null curves in $\mathbb{L}^3$.

\begin{lemma}
Let $I\subset \mathbb{R}$ be an interval and $H$ be a non-zero constant. For a smooth function $\kappa_{\gamma}$ defined on $I$, there exists a non-degenerate null curve $\gamma$ in $\mathbb{L}^3$ such that
\[
\langle \gamma',\gamma'\rangle = 0, \quad \langle \gamma'',\gamma''\rangle = H^2,\quad \text{and}\quad \langle \gamma''',\gamma'''\rangle = 2H^3\kappa_{\gamma}
\] 
up to an isometry in $\mathbb{L}^3$.
\end{lemma}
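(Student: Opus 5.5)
The plan is to treat this as an existence-and-uniqueness statement for a linear ODE, namely the frame equations of Proposition \ref{prop:Frenet}. Given $H\neq 0$ and $\kappa_\gamma$, consider the linear system for an unknown $3\times 3$ matrix $\mathcal{F}=(\vect{\sigma},\vect{n},\vect{e})$:
\[
\mathcal{F}'=\mathcal{F}\,\Omega,\qquad \Omega=\begin{pmatrix} 0&0&-\kappa_\gamma\\ 0&0&H\\ H&-\kappa_\gamma&0\end{pmatrix}.
\]
Fix $s_0\in I$ and choose as initial data a null frame $\mathcal{F}(s_0)$ with $\langle\vect{\sigma},\vect{\sigma}\rangle=\langle\vect{n},\vect{n}\rangle=0$, $\langle\vect{\sigma},\vect{n}\rangle=-1$, $\langle\vect{e},\vect{e}\rangle=1$, and $\vect{e}\perp\vect{\sigma},\vect{n}$. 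Linear ODE theory on the interval $I$ gives a unique global smooth solution.

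\emph{Step 1: the frame stays null.} Let $G=\mathcal{F}^{T}\eta\mathcal{F}$ be the Gram matrix, with $\eta=\mathrm{diag}(-1,1,1)$. Then $G'=\Omega^{T}G+G\Omega$. The constant matrix
\[
G_0=\begin{pmatrix}0&-1&0\\-1&0&0\\0&0&1\end{pmatrix}
\]
satisfies $\Omega^{T}G_0+G_0\Omega=0$. To see this, write $G_0\Omega$ explicitly and check that it is antisymmetric; this is the one computation to verify carefully, since it is where the signs of the entries $H,-\kappa_\gamma$ matter. Uniqueness of solutions of this linear ODE for $G$, together with $G(s_0)=G_0$, gives $G\equiv G_0$. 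So $\mathcal{F}$ is a null frame for all $s$.

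\emph{Step 2: construct the curve and check the invariants.} Define $\gamma(s)=\int_{s_0}^s\vect{\sigma}$. Then:
\begin{itemize}
\item $\langle\gamma',\gamma'\rangle=0$;
\item $\gamma''=\vect{\sigma}'=H\vect{e}$, so $\langle\gamma'',\gamma''\rangle=H^2$ and $\gamma$ is non-degenerate, because $\vect{\sigma}$ and $\vect{e}$ are independent;
\item $\gamma'''=H\vect{e}'=H(-\kappa_\gamma\vect{\sigma}+H\vect{n})$, so $\langle\gamma''',\gamma'''\rangle=-2H^3\kappa_\gamma\langle\vect{\sigma},\vect{n}\rangle=2H^3\kappa_\gamma$.
\end{itemize}

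\emph{Step 3: uniqueness up to isometry.} Let $\tilde\gamma$ be another curve with the same three invariants. By the construction preceding Proposition \ref{prop:Frenet}, its frame $\tilde{\mathcal{F}}$ is uniquely determined and satisfies the same equation $\tilde{\mathcal{F}}'=\tilde{\mathcal{F}}\Omega$. Here $\kappa_\gamma$ is recovered from \eqref{eq:pseudo_arc}, so both frames use the same $\Omega$.

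Both $\mathcal{F}(s_0)$ and $\tilde{\mathcal{F}}(s_0)$ have Gram matrix $G_0$. Hence there is a unique Lorentz transformation $T\in O(2,1)$ with $T\mathcal{F}(s_0)=\tilde{\mathcal{F}}(s_0)$. Since $T\mathcal{F}$ solves the same linear system with the same initial value, uniqueness gives $T\mathcal{F}\equiv\tilde{\mathcal{F}}$. Integrating $\tilde\gamma'=T\gamma'$ shows that $\tilde\gamma=T\gamma+\text{const}$, so the two curves differ by an isometry of $\mathbb{L}^3$. If one insists on orientation-preserving isometries, note that $\det\mathcal{F}$ is constant because $\operatorname{tr}\Omega=0$. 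One then fixes the orientation of the initial frame, or else allows the full isometry group.

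The only potentially delicate point is the compatibility check $\Omega^{T}G_0+G_0\Omega=0$ in Step 1; everything else is standard ODE uniqueness.
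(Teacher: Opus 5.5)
Your proof is correct and is essentially the argument the paper intends: the paper presents this lemma as an immediate consequence of the Frenet--Serret system in Proposition~\ref{prop:Frenet} without writing out details, and your linear-ODE existence/uniqueness argument supplies those details, including the correct check that $G_0\Omega$ is antisymmetric. Your orientation remark is also right: uniqueness holds in the full isometry group, but for orientation-preserving isometries you must fix the orientation of the initial frame.
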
 
%%%%%%%%%%%%

Null curves also have the following Weierstrass-like representation formula as in Fact \ref{fact:correspond}.
\begin{lemma}
For any smooth regular null curve, $\gamma=(\gamma_1,\gamma_2,\gamma_3)$  satisfying $\gamma_1'\neq \gamma_2'$, there exist non-vanishing smooth functions $h$ and $\hat{\omega}$ such that
\begin{equation}\label{eq:Wformula_null}
\gamma(s) = \frac{1}{2} 
{\int \left(-1-h^2, 1-h^2, 2h\right)\hat{\omega}ds}.
\end{equation}
Moreover, when $\gamma$ is non-degenerate, we can take a parameter $s$ satisfying \eqref{eq:psudo-arc} so that $\hat{\omega}=H/h'$. In this case, the lightlike curvature $\kappa_\gamma$ is written as
\begin{equation}\label{eq:Schwarzian}
\kappa_\gamma =\frac{{S}(h)}{H}, 
\end{equation}
where $S(h)$ is the Schwarzian derivative of $h$, that is,
\[
S(h)=\frac{h'''}{h'}-\frac{3}{2}\left(\frac{h''}{h'}\right)^2.
\]

Conversely, for any smooth function $h$ with non-vanishing derivative, the curve $\gamma$ defined by \eqref{eq:Wformula_null} with $\hat{\omega}=H/h'$ is a smooth regular non-degenerate null curve whose lightlike curvature is given by \eqref{eq:Schwarzian}.
\end{lemma}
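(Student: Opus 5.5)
We prove the three parts in order: the Weierstrass-type representation, the normalization $\hat\omega=H/h'$ together with the Schwarzian formula, and the converse. Throughout we use the signature $(-++)$ product $\langle x,y\rangle=-x_1y_1+x_2y_2+x_3y_3$.

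\emph{Representation.} Set $\Phi(h):=\left(-1-h^2,\,1-h^2,\,2h\right)$. A direct check gives
\[
\langle\Phi,\Phi\rangle=-(1+h^2)^2+(1-h^2)^2+4h^2=0,
\]
so $\Phi$ is lightlike. Given a regular null curve $\gamma$, the conditions $\gamma_1'\neq\gamma_2'$ and $(\gamma_1'-\gamma_2')(\gamma_1'+\gamma_2')=(\gamma_3')^2$ suggest the definitions
\[
\hat\omega:=-(\gamma_1'-\gamma_2'),\qquad h:=\gamma_3'/\hat\omega .
\]
Since $\Phi_1-\Phi_2=-2$, the first two components of $\tfrac12\Phi\hat\omega$ then give the correct $\gamma_1'-\gamma_2'$, and the third gives $h\hat\omega=\gamma_3'$. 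The null condition then forces $\gamma_1'+\gamma_2'=-h^2\hat\omega$, which matches $\tfrac12(\Phi_1+\Phi_2)\hat\omega$. Hence $\gamma'=\tfrac12\Phi\hat\omega$, and $\hat\omega$ is non-vanishing. If one wants $h$ itself non-vanishing as well, this can only hold locally, or after a suitable rigid motion.

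\emph{Normalization.} From $\gamma'=\tfrac12\hat\omega\Phi$ we get $\gamma''=\tfrac12(\hat\omega'\Phi+\hat\omega h'\Phi_h)$, where $\Phi_h=(-2h,-2h,2)$. We compute $\langle\Phi,\Phi_h\rangle=0$ and $\langle\Phi_h,\Phi_h\rangle=4$. Therefore
\[
\langle\gamma'',\gamma''\rangle=\hat\omega^2h'^2 .
\]
Non-degeneracy is equivalent to $\hat\omega h'\neq0$. Reparametrizing $s$ so that $\hat\omega^2h'^2=H^2$, and choosing the sign (possibly replacing $H$ by $-H$, or reversing orientation), gives $\hat\omega=H/h'$.

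\emph{Schwarzian formula.} This is the main computation. By \eqref{eq:pseudo_arc}, it suffices to show
\[
\langle\gamma''',\gamma'''\rangle=2H^2S(h).
\]
Write $\gamma'=\tfrac{H}{2}u\Phi$ with $u=1/h'$. Then
\[
\gamma'''=\tfrac{H}{2}\bigl(u''\Phi+(2u'h'+uh'')\Phi_h+u h'^2\Phi_{hh}\bigr),\qquad \Phi_{hh}=(-2,-2,0).
\]
The needed inner products are
\[
\langle\Phi,\Phi\rangle=0,\quad \langle\Phi,\Phi_h\rangle=0,\quad \langle\Phi_h,\Phi_h\rangle=4,\quad \langle\Phi,\Phi_{hh}\rangle=-4,\quad \langle\Phi_h,\Phi_{hh}\rangle=0,\quad \langle\Phi_{hh},\Phi_{hh}\rangle=0 .
\]
Hence
\[
\langle\gamma''',\gamma'''\rangle=\tfrac{H^2}{4}\Bigl(4(2u'h'+uh'')^2-8u''uh'^2\Bigr).
\]
Now substitute $u'=-h''/h'^2$ and $u''=-h'''/h'^2+2h''^2/h'^3$. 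This gives $2u'h'+uh''=-h''/h'$, and the bracket becomes
\[
4\frac{h''^2}{h'^2}+8\frac{h'''}{h'}-16\frac{h''^2}{h'^2}=8\Bigl(\frac{h'''}{h'}-\tfrac32\frac{h''^2}{h'^2}\Bigr).
\]
So $\langle\gamma''',\gamma'''\rangle=2H^2S(h)$, and \eqref{eq:pseudo_arc} yields $\kappa_\gamma=S(h)/H$, which is \eqref{eq:Schwarzian}.

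\emph{Converse.} For any $h$ with $h'\neq0$, define $\gamma$ by \eqref{eq:Wformula_null} with $\hat\omega=H/h'$. The same computations show that $\gamma$ is smooth, regular and null, that $\langle\gamma'',\gamma''\rangle=H^2$ (so $\gamma$ is non-degenerate with $s$ already a parameter satisfying \eqref{eq:psudo-arc}), and that its lightlike curvature is given by \eqref{eq:Schwarzian}.

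The only delicate points are sign and orientation conventions. One must track the choice of sign of $\hat\omega$ relative to $H$, and confirm that \eqref{eq:pseudo_arc} is the normalization consistent with the frame used in Proposition \ref{prop:Frenet}.
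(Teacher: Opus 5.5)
Your proposal is correct and is essentially the paper's proof. You use the same choice $\hat\omega=-\gamma_1'+\gamma_2'$, $h=\gamma_3'/\hat\omega$, then compute $\langle\gamma''',\gamma'''\rangle=2H^2S(h)$ directly and compare with \eqref{eq:pseudo_arc}. Your bookkeeping through $\Phi,\Phi_h,\Phi_{hh}$ is a tidier organization of the same calculation. You also justify $\langle\gamma'',\gamma''\rangle=\hat\omega^2h'^2$ and sketch the converse, which the paper only cites.

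One correction to your closing sign remark. Reversing the parameter cannot change the sign of $\hat\omega h'$, since any reparametrization multiplies $\hat\omega h'$ by $(ds/d\sigma)^2>0$; so only the sign of $H$ can be matched. This is harmless, because $\langle\gamma''',\gamma'''\rangle$ is even in $\hat\omega$, so $\kappa_\gamma=S(h)/H$ holds for $\hat\omega=H/h'$ and $\hat\omega=-H/h'$ alike.
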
 

\begin{proof}
This lemma was essentially proved by \cite{Olszak} and, in the case of holomorphic null curves in $\mathbb{C}^3$, by \cite{Pabel}. Here, for the reader's convenience, we provide only the proof of the first part.

For a null curve $\gamma=(\gamma_1,\gamma_2,\gamma_3)$, we can take functions $\hat{\omega}$ and $h$ as
\[
\hat{\omega}:=-\gamma_1'+\gamma_2',\quad h:=\frac{\gamma_3'}{-\gamma_1'+\gamma_2'},
\]
which gives the formula \eqref{eq:Wformula_null}.

When $\gamma$ is non-degenerate, $\langle \gamma'', \gamma'' \rangle>0$ holds and hence we can take a parameter $s$ of $\gamma$ satisfying \eqref{eq:psudo-arc} so that $\hat{\omega}=H/h'$. For such a parameter $s$, the derivatives of $\gamma$ can be computed as
\begin{align*}
\gamma' = \frac{H}{2h'}(-1-h^2,1-h^2,2h),\quad \gamma''= -\frac{Hh''}{2h'^2}(-1-h^2,1-h^2,2h)+H(-h,-h,1),\\
\gamma'''=-\frac{H}{2}\left(\frac{h'''h'^2-2h''^2h'}{h'^4}\right)(-1-h^2,1-h^2,2h)-\frac{Hh''}{h'}(-h,-h,1)+H(-h',-h',0).
\end{align*}
Then we have 
\begin{align}
\langle \gamma''', \gamma''' \rangle &= H^2h'\left(\frac{h'''h'^2-2h''^2h'}{h'^4}\right)+\left(\frac{Hh''}{h'}\right)^2\notag \\ 
&=2H^2\left(\frac{h'''}{h'}-3\frac{h''^2}{h'^2} \right)\notag \\
&=2H^2S(h). \label{eq:gamma'''}
\end{align}
By comparing \eqref{eq:pseudo_arc} with \eqref{eq:gamma'''}, we obtain the desired relation \eqref{eq:Schwarzian}.
\end{proof}
%%%%%%%%%%%%

\vspace{0.3cm}

In summary, if we put $\gamma$ as $\int B(s)ds$ in \eqref{eq:B_rep}, then we obtain the relation of curvatures
\begin{equation}\label{eq:curvature_relation}
-\kappa_2 = \kappa_{\gamma} = S(h)/H
\end{equation}
by combining the relations \eqref{eq:B_rep}, \eqref{eq:pseudo_arc} and  \eqref{eq:Schwarzian}.

Finally, we obtain the following construction method of timelike constant mean curvature $H$ B-scrolls and their corresponding timelike minimal surfaces in $\Nil(H)$ with singularities from given smooth functions.

\begin{theorem}\label{thm:Schwarzian_criterion}
For any non-zero constant $H$ and any smooth function $h=h(s)\colon I \to \mathbb{R}$ with non-vanishing derivative, there exists a regular timelike constant mean curvature $H$ B-scroll $f_L(s,t)=\gamma(s)+tB(s)$ in $\mathbb{L}^3$ associated with the null frame $(A,B,C)$ satisfying \eqref{eq:frame_cond}, 
\begin{equation}\label{eq:ABC_h}
A=\frac{S(h)}{H^2}B+\frac{1}{H^2}B'',\quad B= -\frac{H}{2h'} 
{ \left(-1-h^2, 1-h^2, 2h\right)},\quad C=\frac{1}{H}B'
\end{equation}
and the Frenet-Serret equations
\begin{equation}\label{eq:FS_Bscroll_2}
(A',B', C')=(A,B,C)\begin{pmatrix}
0 & 0 & H  \\
0 & 0  &  -S(h)/H \\
-S(h)/H  & H  & 0
\end{pmatrix}.
\end{equation}

Moreover, for its corresponding timelike minimal surface $f$ in $\Nil(H)$, the following assertions hold.
\begin{itemize}
\item[(1)] $(s,t)\in I\times \mathbb{R}$ is a singular point of $f$ if and only if $t=-\frac{C_3(s)}{HB_3(s)}$, where $B=(B_1,B_2,B_3)$ and $C=(C_1,C_2,C_3)$. Furthermore, any singular point is non-degenerate.
\end{itemize}

Let $(s,t)$ be a singular point of $f$. Then,
\begin{itemize}
\item[(2)] $f$ is a front at $(s,t)$ if and only if $S(h)(s)\neq 0$.
\item[(3)] $f$ is $\mathcal{A}$-equivalent to the cuspidal cross cap at $(s,t)$ if and only if $S(h)(s)= 0$ and $S(h)'(s)\neq 0$.
\end{itemize}
\end{theorem}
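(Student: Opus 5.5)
The plan is to reduce everything to Lemma \ref{lem:B_construction} and Theorem \ref{thm:criteria_Bscroll}. Given $h$ with $h'\neq 0$, define $B$ by \eqref{eq:ABC_h}. We will show that $B$ satisfies \eqref{eq:B_cond}. Then Lemma \ref{lem:B_construction} produces a null frame $(A,B,C)$ solving \eqref{eq:FS_Bscroll}, with $A=-\frac{\kappa_2}{H}B+\frac{1}{H^2}B''$ and $C=\frac1H B'$. Once we know $\kappa_2=-S(h)/H$, these formulas become exactly \eqref{eq:ABC_h} and \eqref{eq:FS_Bscroll_2}. We then set $\gamma:=\int A\,ds$ and $f_L=\gamma+tB$. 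By the computation of $\mathrm{I}$ in Section \ref{sec:3} with $\kappa_1=0$, $\kappa_3=-H$, the surface $f_L$ is a regular timelike B-scroll of constant mean curvature $H$. Indeed, $\det\mathrm{I}=-1\neq 0$ everywhere, so it is regular on all of $I\times\mathbb{R}$.

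The key computation concerns $B$ itself. Observe that $-B=\frac{H}{2h'}(-1-h^2,1-h^2,2h)$, which is exactly the derivative $\gamma'$ of the null curve in \eqref{eq:Wformula_null} with $\hat\omega=H/h'$. The proof of that lemma already computed its derivatives. So $\langle B,B\rangle=0$, $\langle B',B'\rangle=\langle\gamma'',\gamma''\rangle=H^2$, and, by \eqref{eq:gamma'''}, $\langle B'',B''\rangle=2H^2S(h)$.

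\textbf{Step 1 (definition of $\kappa_2$).} Lemma \ref{lem:B_construction} defines $\kappa_2=-\frac{1}{2H^3}\langle B'',B''\rangle=-S(h)/H$; this is the relation \eqref{eq:curvature_relation}. Substituting it gives $A=\frac{S(h)}{H^2}B+\frac1{H^2}B''$ and the matrix in \eqref{eq:FS_Bscroll_2}. I would double-check the factor $2H^2$ versus $2H^3$ in \eqref{eq:gamma'''} against \eqref{eq:pseudo_arc}. I will simply trust the direct derivative formulas, since only the vanishing of $\kappa_2$ and $\kappa_2'$ matters for (2) and (3).

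\textbf{Step 2 (orientation).} The remaining hypothesis of Lemma \ref{lem:B_construction} is $H\det(B,B',B'')>0$. A direct determinant computation using $\gamma'$, $\gamma''$, $\gamma'''$ from the earlier proof should give $\det(\gamma',\gamma'',\gamma''')=\pm H^3$. Hence $\det(B,B',B'')=-\det(\gamma',\gamma'',\gamma''')$ has a definite sign times $H^3$. This is the step I expect to be the main obstacle: it is where the sign convention (the minus sign in $B$) must be confirmed, so I would evaluate the determinant explicitly, perhaps at a point where $h=0$ to simplify. If the sign came out wrong, one would instead need to reflect by an orientation-reversing isometry. The stated formula, however, indicates that the minus sign was chosen precisely so that $\det(B,B',B'')=H^3$.

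\textbf{Step 3 (singularities).} Assertion (1) is Theorem \ref{thm:criteria_Bscroll}(1) verbatim. For (2), Theorem \ref{thm:criteria_Bscroll}(2) says $f$ is a front iff $\kappa_2(s)\neq0$, i.e. iff $S(h)(s)\neq0$. For (3), I use the proof of Theorem \ref{thm:criteria_Bscroll} rather than its displayed statement, whose condition ``$\kappa_2'=0$'' is evidently a typo for $\kappa_2'\neq0$. When $\kappa_2=0$, we have $\langle c_L'',\bm e_3\rangle=-\kappa_2'B_3/H$, and also $c_L'\nparallel\bm e_3$, since $c_L'$ has nonzero $A$-component. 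Then Theorem \ref{thm:criteria2}(5) gives a cuspidal cross cap iff $\kappa_2'\neq0$, i.e. iff $S(h)'(s)\neq0$.
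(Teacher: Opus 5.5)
Your proposal is correct and is essentially the paper's own argument. You define $B$ from $h$, verify \eqref{eq:B_cond} using the derivatives of the null curve in \eqref{eq:Wformula_null}, read off $\kappa_2=-S(h)/H$ from $\langle B'',B''\rangle=2H^2S(h)$ via Lemma~\ref{lem:B_construction}, and transfer Theorem~\ref{thm:criteria_Bscroll}, rightly treating its condition $\kappa_2'=0$ as a typo for $\kappa_2'\neq0$, which is what its proof establishes. The sign you left open comes out as you predicted: $\det(\gamma',\gamma'',\gamma''')=\frac{H^3}{2h'}\det\bigl((-1-h^2,1-h^2,2h),(-h,-h,1),(-h',-h',0)\bigr)=-H^3$, so $\det(B,B',B'')=H^3$. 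For $c_L'\nparallel\bm e_3$, the precise reason is that $c_L'\neq0$ while $\langle c_L',\bm e_3\rangle=0$ and $\langle\bm e_3,\bm e_3\rangle=1$; a nonzero $A$-component alone would not suffice, since $\bm e_3$ itself has $A$-component $-B_3\neq0$ on $\Sigma$.
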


\begin{proof}
For given $H$ and $h=h(s)$, we can define 
\[
B(s)= \frac{H}{2h'} 
{ \left(-1-h^2, 1-h^2, 2h\right)}
\]
to be the integrand on the right-hand side of equation \eqref{eq:Wformula_null} multiplied by $-1$. Then we can easily check the relation \eqref{eq:B_cond}.
By \eqref{eq:curvature_relation}, we have $\kappa_2 =  -S(h)/H$ for the null frame $(A,B,C)$. 
Putting this $\kappa_2$ into \eqref{eq:FS_Bscroll} and \eqref{eq:B_rep}, we obtain the desired null frame $(A,B,C)$. 

The latter assertions (1), (2) and (3) are directly proved from Theorem \ref{thm:criteria_Bscroll} and \eqref{eq:curvature_relation}.
\end{proof}

\section{Invariance of singularity}
We are considering the duality between timelike constant mean curvature $H$ surfaces in $\mathbb{L}^3$ and timelike minimal surfaces in $\Nil(H)$.

In the following, for a timelike constant mean curvature $H$ surface $f_L$ in $\mathbb{L}^3$, 
we denote by $\Phi$ the map that associates the timelike minimal surface $f$ in 
$\Nil(H)$ given by Fact \ref{fact:correspond}; that is, $f = \Phi \circ f_L$.
This duality $\Phi$ is not compatible with isometries of the respective ambient spaces $\mathbb{L}^3$ and $\Nil(H)$. 
Hence, for a timelike constant mean curvature $H$ surface $f_L$ in $\mathbb{L}^3$ 
and its Lorentz transform $f_L^O := O f_L$, where $O$ is an element of the Lorentz group $O(2,1)$, the dual surface $f^O := \Phi \circ f_L^O$ is not necessarily isometric (and hence not congruent) to $f$ in $\Nil(H)$. 

\[
\begin{CD}
f_L @>\Phi>> f \\
@V O \in O(2,1) VV  \\
f_L^O @>\Phi>> f^O
\end{CD}
\vspace{0.3cm}
\]

Thus, via the duality $\Phi$, one can obtain from a single timelike minimal surface $f$ in $\Nil(H)$ the $O(2,1)$-family $\{ f^O \}_{O \in O(2,1)}$ of timelike minimal surfaces in $\Nil(H)$. 
When $f$ is of B-scroll type, each $f^O$ is also of B-scroll type.
For the null frame $(A,B,C)$ adapted to $f$, the null frame adapted to $f^O$ is given by $(OA, OB, OC)$. 
By Theorem \ref{thm:criteria_Bscroll}, a singular point $(s,t)$ of $f$ satisfies
\[
t = -\frac{C_3(s)}{H B_3(s)}.
\]
If we set $OB = (B_1^O, B_2^O, B_3^O)$ and $OC = (C_1^O, C_2^O, C_3^O)$, then the corresponding singular point $(s, t^O)$ of $f^O$ also satisfies
\[
t^O = -\frac{C_3^O(s)}{H B_3^O(s)}.
\]

Although $f$ and $f^O$ are not isometric within $\Nil(H)$ in general, it is nevertheless remarkable that the conditions (2) and (3) in Theorem~\ref{thm:criteria_Bscroll} depend only on the curvature $\kappa_2$, which is invariant under isometries of $\mathbb{L}^3$. Hence, we obtain the following invariance theorem which asserts that the properties (1) and (2) of surfaces in $\Nil(H)$ are preserved under isometries of $\mathbb{L}^3$.

\begin{corollary}[Invariance of singularities]\label{cor:invariance1}
Let $(s,t)$ be a singular point of a B-scroll type timelike minimal surface $f$ in $\Nil(H)$ and $(s,t^O)$ be the corresponding singular point of $f^O$ as above. Then the following statements hold.
\begin{itemize}
\item[(1)] $f$ is a front at $(s,t)$ if and only if $f^O$ is a front at $(s,t^O)$.
\item[(2)] $f$ is $\mathcal{A}$-equivalent to the cuspidal cross cap at $(s,t)$ if and only if $f^O$is $\mathcal{A}$-equivalent to the cuspidal cross cap at at $(s,t^O)$.
\end{itemize}
In particular, if $f$ has a cuspidal cross cap, then the family $\{f^O\}_{O\in O(2,1)}$ consists of B-scroll type timelike minimal surfaces in $\Nil(H)$ with a cuspidal cross cap.
\end{corollary}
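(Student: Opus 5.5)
The statement to prove is Corollary \ref{cor:invariance1}. The plan is to reduce everything to Theorem \ref{thm:criteria_Bscroll}, whose criteria depend only on $\kappa_2(s)$ and $\kappa_2'(s)$, and to check that $\kappa_2$ is unchanged when $f_L$ is replaced by $f_L^O=Of_L$.

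First I would verify that $f_L^O$ is again a constant mean curvature $H$ B-scroll with the same parameter $s$. Write $f_L^O(s,t)=O\gamma(s)+t\,OB(s)$; the curve $O\gamma$ is null with $(O\gamma)'=OA$. The issue is whether $(OA,OB,OC)$ satisfies \eqref{eq:frame_cond}. Since $O$ preserves $\langle\cdot,\cdot\rangle$, we get $\langle OB,OB\rangle=0$ and $\langle OA,OB\rangle=-1$. For the cross product we have $OA\times OB=(\det O)\,O(A\times B)=(\det O)\,OC$.

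If $\det O=-1$, I would keep $\tilde C:=-OC$ as the third frame vector. The Frenet matrix in \eqref{eq:FS_Bscroll} then has entries $\kappa_3=H$ replaced by $-H$. So for $\det O=-1$ the mean curvature changes sign unless we also reverse orientation, e.g. by reparametrizing $s\mapsto -s$ and adjusting $B$. I expect this orientation bookkeeping to be the only delicate point. The simplest route is to note that the paper's own discussion already asserts that $(OA,OB,OC)$ is the adapted frame of $f^O$, which carries the same mean curvature $H$. I would accept that and, if needed, restrict attention to the component of $O(2,1)$ preserving orientation, with a remark on the other component.

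Given this, differentiate $(OA,OB,OC)'=(OA,OB,OC)M$, where $M$ is the same matrix as in \eqref{eq:FS_Bscroll}, because $O$ is constant and linear. Hence $\kappa_1=0$ still holds, the B-scroll condition is preserved, and the curvature $\kappa_2^O=\langle (OA)',OC\rangle=\langle OA',OC\rangle=\langle A',C\rangle=\kappa_2$ is unchanged. Equivalently, by \eqref{eq:B_rep}, $\kappa_2=-\langle B'',B''\rangle/2H^3$, and this expression is manifestly $O$-invariant.

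Next I would apply Theorem \ref{thm:criteria_Bscroll}(1) to $f^O$. Its singular points are exactly $(s,t^O)$ with $t^O=-C_3^O/(HB_3^O)$, and $B_3^O\neq0$ there by the argument in that proof. So for each $s$ the point $(s,t)\in\Sigma$ corresponds to a unique $(s,t^O)\in\Sigma^O$. Part (2) of Theorem \ref{thm:criteria_Bscroll} then shows that both $f$ and $f^O$ are fronts at these points if and only if $\kappa_2(s)\ne0$, which gives (1).

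For (2), I would use the cuspidal cross cap criterion. The intended condition, as in Theorem \ref{thm:Schwarzian_criterion}(3), is $\kappa_2(s)=0$ and $\kappa_2'(s)\neq0$; this matches the computation $\langle c_L'',e_3\rangle=-\kappa_2'B_3/H$. Since the condition depends only on $\kappa_2$, it holds for $f$ at $(s,t)$ if and only if it holds for $f^O$ at $(s,t^O)$. The final sentence of the corollary follows by applying (2) at the cuspidal cross cap point of $f$, for every $O$.
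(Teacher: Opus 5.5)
Your strategy is the paper's. Its justification is precisely the remark that the criteria of Theorem~\ref{thm:criteria_Bscroll} involve only $\kappa_2$, which is unchanged under $(A,B,C)\mapsto(OA,OB,OC)$. You are also right that the cuspidal cross cap condition must be read as $\kappa_2(s)=0$, $\kappa_2'(s)\neq0$. The computation of $\langle c_L'',\bm{e}_3\rangle$ in the proof of Theorem~\ref{thm:criteria_Bscroll} shows that ``$\kappa_2'(s)=0$'' there is a misprint. Your orientation worry is harmless for the criteria. For $\det O=-1$ the admissible frame $(OA,OB,-OC)$ replaces $(H,\kappa_2)$ by $(-H,-\kappa_2)$. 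Neither the two conditions nor the formula $t^O=-C^O_3/(HB^O_3)$ change under this simultaneous sign flip, so no restriction on $O$ is needed for them.

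There is, however, a step that fails as written: the claim that each $(s,t)\in\Sigma$ has a partner $(s,t^O)\in\Sigma^O$. The proof of Theorem~\ref{thm:criteria_Bscroll}(1) gives $B^O_3\neq0$ only at points already in $\Sigma^O$. The line $\{s\}\times\mathbb{R}$ meets $\Sigma^O$ if and only if $B^O_3(s)=\langle B(s),O^{-1}\bm{e}_3\rangle\neq0$, and this does not follow from $B_3(s)\neq0$.

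Here is a concrete failure. The identity component of $O(2,1)$ acts transitively on $\mathbb{S}^2_1$, so you can pick such an $O$ with $OC(s_0)=\bm{e}_3$. Then $B^O_3(s_0)=\langle B(s_0),C(s_0)\rangle=0$ and $C^O_3(s_0)=1$. Hence $\langle N^O_L,\bm{e}_3\rangle=-1$ along the whole line $s=s_0$, and the singular curve of $f^O$ escapes to infinity there.

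Statements (1) and (2) survive, because their hypothesis presupposes that $(s,t^O)$ exists. Your final step ``for every $O$'' does not. For $h=\cot(e^s/2)$ in Section 6, $\kappa_2$ vanishes only at $s_0=0$ and $B_3(0)=-Hh(0)/h'(0)\neq0$. So $f$ has a cuspidal cross cap there, while for the above $O$ the surface $f^O$ has none at all.

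The last sentence of the corollary holds only for $O$ with $\langle B(s_0),O^{-1}\bm{e}_3\rangle\neq0$, which is an open dense subset of $O(2,1)$. You need to add this proviso; the paper's one-line argument makes the same tacit assumption.
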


Since such invariance properties do not hold for specific front singularities (e.g., cuspidal edges or swallowtails) of B-scroll type surfaces, these singularities cannot be characterized solely in terms of the curvature $\kappa_2$, which is an isometry invariant of $\mathbb{L}^3$. In other words, the diffeomorphism types of front singularities depend on the initial values of the null frame $(A,B,C)$ chosen when solving the Frenet--Serret equations \eqref{eq:FS_Bscroll_2}. More precisely, this can be formulated as follows.

\begin{proposition}\label{prop:NotCE}
Let $f$ be a B-scroll type timelike minimal surface associated with a null frame $(A, B, C)$ and the curvature $\kappa_2$ as in \eqref{eq:FS_Bscroll}. If $(s,t)$ be a singular point of $f$ satisfying $\kappa_2(s)\neq 0$, then following statements are equivalent.

\begin{itemize}
\item[(1)] $f$ is not $\mathcal{A}$-equivalent to the cuspidal edge at $(s,t)$,
\item[(2)] $\cfrac{\kappa_2}{H}B_3^2=1$\quad and\quad $2A_3B_3+1-C_3^2=0$ at $s$.
\end{itemize}
\end{proposition}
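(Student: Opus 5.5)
Since $\kappa_2(s)\neq 0$, the singular point is non-degenerate and $f$ is a front there by Theorem \ref{thm:criteria_Bscroll}. Moreover $\langle c_L',\bm e_3\rangle = -\kappa_2B_3/H\neq 0$, because $B_3\neq0$ on $\Sigma$. By (3) of Theorem \ref{thm:criteria2}, $f$ fails to be a cuspidal edge exactly when $c_L'\parallel \bm e_3$ at $s$. So the plan is to rewrite the parallelism condition $c_L'\parallel\bm e_3$ in terms of the frame components, using the expression \eqref{eq:cL} for $c_L'$ with $\kappa_1=0$:
\[
c_L'=A+\Big(-\frac{A_3}{B_3}-\frac{\kappa_2}{H}+\frac{C_3^2}{B_3^2}\Big)B-\frac{C_3}{B_3}C .
\]

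First I use that $(A,B,C)$ is a null frame, which lets me expand $\bm e_3$ in this frame:
\[
\bm e_3=-B_3A-A_3B+C_3C,
\]
since $\langle A,\bm e_3\rangle=A_3$, $\langle B,\bm e_3\rangle=B_3$, $\langle C,\bm e_3\rangle=C_3$, $\langle A,B\rangle=-1$ and $\langle C,C\rangle=1$. The same expansion applied to the third component gives the normalization identity
\[
1=\langle\bm e_3,\bm e_3\rangle=-2A_3B_3+C_3^2 .
\]

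Next, $c_L'\parallel\bm e_3$ means $c_L'=\mu\bm e_3$ for some $\mu$. Comparing the $A$-coefficients gives $\mu=-1/B_3$. The $C$-coefficient then matches automatically, since $-C_3/B_3=\mu C_3$. The $B$-coefficients give
\[
-\frac{A_3}{B_3}-\frac{\kappa_2}{H}+\frac{C_3^2}{B_3^2}=\frac{A_3}{B_3}.
\]
Multiplying by $B_3^2$ turns this into $\frac{\kappa_2}{H}B_3^2 = C_3^2-2A_3B_3$.

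Finally I combine this with the normalization identity to reach the two equations in (2). If $c_L'\parallel\bm e_3$, then $C_3^2-2A_3B_3=1$, i.e.\ $2A_3B_3+1-C_3^2=0$, and hence $\frac{\kappa_2}{H}B_3^2=1$. Conversely, the two equations in (2) give back the $B$-coefficient identity, so $c_L'=-\bm e_3/B_3$ is parallel to $\bm e_3$. The second equation in (2) is in fact automatic from $\langle\bm e_3,\bm e_3\rangle=1$, so including it is harmless.

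The main obstacle is getting the signs right in the frame expansion of $\bm e_3$ and in $\langle C,\bm e_3\rangle$. This depends on the convention $C=A\times B$ and on the signature $(-++)$, so I would double-check it against $\langle A,B\rangle=-1$ before matching coefficients.
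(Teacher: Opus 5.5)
Your proof is correct, and it takes a cleaner route than the paper's.

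\textbf{The paper's route.} It writes $c_L'\parallel\bm{e}_3$ as the vanishing of the first two components of \eqref{eq:cL} and solves these for $A_1,A_2$. Substituting into $\langle A,B\rangle=-1$ gives the first equation of (2), and substituting into $\langle A,A\rangle=0$ gives the second. For the converse it builds the candidate vector $\tilde A$. It then identifies $\tilde A$ with $A(s)$, since a null vector with $\langle \tilde A,B\rangle=-1$ and $\langle \tilde A,C\rangle=0$ is unique.

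\textbf{Your route.} You expand $\bm{e}_3=-B_3A-A_3B+C_3C$ in the frame, so parallelism becomes coefficient matching in a basis. This handles both implications at once and avoids solving for individual components of $A$.

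\textbf{What your route shows that the paper's does not.} The second equation $2A_3B_3+1-C_3^2=0$ is exactly $\langle\bm{e}_3,\bm{e}_3\rangle=1$, so it holds for every null frame. Condition (2) therefore reduces to $\frac{\kappa_2}{H}B_3^2=1$. This matches the paper's swallowtail example, where only $2B_3^2=1$ is actually used. Consequently, the step you phrase as ``if $c_L'\parallel\bm e_3$, then $C_3^2-2A_3B_3=1$'' is unconditional; this is harmless.

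\textbf{Your sign concern.} The sign issue you flag is not a real obstacle. The expansion of $\bm{e}_3$ uses only the Gram matrix of $(A,B,C)$, not the orientation convention $C=A\times B$. Also, $\langle C,\bm{e}_3\rangle=C_3$ holds directly in signature $(-++)$.
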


\begin{proof}
By Theorem \ref{thm:criteria_Bscroll} and the assumption $\kappa_2(s)\neq0$,  $f$ is a front at $(s,t)$. By Theorem \ref{thm:criteria2} and \eqref{eq:cL}, the assertion (2) is equivalent to $\langle c_L', e_i \rangle=0$ for $i=1,2$ at $s$, that is,

\begin{equation}\label{eq:A12}
A_i+\left( -\frac{A_3}{B_3}-\frac{\kappa_2}{H}+\frac{C_3^2}{B_3^2}\right)B_i-\frac{C_3}{B_3}C_i=0,\quad i=1,2,\quad \text{at $s$}.
\end{equation}

Hence, if we assume (1), the vector $A(s)$ is written as
\begin{equation}\label{eq:As}
A(s)=\left( \left( \frac{A_3}{B_3}+\frac{\kappa_2}{H}-\frac{C_3^2}{B_3^2}\right)B_1+\frac{C_3}{B_3}C_1, \left( \frac{A_3}{B_3}+\frac{\kappa_2}{H}-\frac{C_3^2}{B_3^2}\right)B_2+\frac{C_3}{B_3}C_2, A_3\right).
\end{equation}
Using the $A(s)$ as above, we have 
\begin{align*}
-1&=\langle A, B\rangle \\
&=-\left( \frac{A_3}{B_3}+\frac{\kappa_2}{H}-\frac{C_3^2}{B_3^2}\right)B_1^2-\frac{C_3}{B_3}C_1B_1+\left( \frac{A_3}{B_3}+\frac{\kappa_2}{H}-\frac{C_3^2}{B_3^2}\right)B_2^2+\frac{C_3}{B_3}C_2B_2+A_3B_3\\
&=-\left( \frac{A_3}{B_3}+\frac{\kappa_2}{H}-\frac{C_3^2}{B_3^2}\right)B_3^2-\frac{C_3}{B_3}C_3B_3+A_3B_3\\
&=-\frac{\kappa_2}{H}B_3^2,
\end{align*} 
proving the first relation of (2). Similarly, we have
\begin{align*}
0&=\langle A, A\rangle \\
&=-\left( \frac{A_3}{B_3}+\frac{\kappa_2}{H}-\frac{C_3^2}{B_3^2}\right)B_1A_1-\frac{C_3}{B_3}C_1A_1+\left( \frac{A_3}{B_3}+\frac{\kappa_2}{H}-\frac{C_3^2}{B_3^2}\right)B_2A_2+\frac{C_3}{B_3}C_2A_2+A_3^2\\
&=-\left( \frac{A_3}{B_3}+\frac{\kappa_2}{H}-\frac{C_3^2}{B_3^2}\right)(1+B_3A_3)-\frac{C_3}{B_3}C_3A_3+A_3^2\\
&=-\left( \frac{A_3}{B_3}+\frac{\kappa_2}{H}-\frac{C_3^2}{B_3^2}\right)-\frac{\kappa_2}{H}B_3A_3.
\end{align*} 
Putting the relation ${\kappa_2}/{H}=1/B_3^2$ into the above equation, we obtain
\[
0=-2 \frac{A_3}{B_3}-\frac{1}{B^2_3}+\frac{C_3^2}{B_3^2}=-\frac{1}{B_3^2}\left(2A_3B_3+1-C_3^2 \right),
\]
which proves the second relation of (2).

Conversely, let us assume (2).  
If we denote by $\tilde{A}$ the vector on the right-hand side of \eqref{eq:As}, 
then by retracing the above computation in reverse, we find that $\tilde{A}$ satisfies  
\[
\langle \tilde{A}, \tilde{A} \rangle = \langle \tilde{A}, C \rangle = 0, \quad 
\langle \tilde{A}, B \rangle = -1.
\]
This implies that $A(s)=\tilde{A}$ and hence $A(s)$ satisfies \eqref{eq:A12}, which proving (1).
\end{proof}

By (2) of Proposition~\ref{prop:NotCE}, we obtain the following relation between the curvature $\kappa_2$ and the parameter $H$ of the Heisenberg group $\Nil(H)$ for the existence and non-existence of front singularities other than cuspidal edges.

\begin{corollary}
At any singular point of a timelike minimal B-scroll type surface $f$ in $\Nil(H)$ generated by the null frame $(A,B,C)$ with $\kappa_2>0$ (resp.  $\kappa_2<0$) on which $f$ is a front, $f$ is $\mathcal{A}$-equivalent to the cuspidal edge when $H<0$ (resp. $H>0$).
\end{corollary}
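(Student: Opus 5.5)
The plan is to read the corollary off from Proposition \ref{prop:NotCE}, which is stated just before it. Let $(s,t)$ be a singular point at which $f$ is a front; by Theorem \ref{thm:criteria_Bscroll}(2) this means $\kappa_2(s)\neq 0$, so Proposition \ref{prop:NotCE} applies at this point. By that proposition, $f$ fails to be $\mathcal{A}$-equivalent to the cuspidal edge at $(s,t)$ if and only if both relations in its condition (2) hold at $s$. It therefore suffices to show that the first relation,
\[
\frac{\kappa_2(s)}{H}B_3(s)^2=1,
\]
cannot hold under the sign hypotheses of the corollary.

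For this, recall from the proof of Theorem \ref{thm:criteria_Bscroll}(1) that $B_3\neq 0$ on the singular set $\Sigma$, so $B_3(s)^2>0$. Consequently the first relation in (2) forces $\kappa_2(s)/H=1/B_3(s)^2>0$; in other words, $\kappa_2(s)$ and $H$ must have the same sign. If $\kappa_2>0$ and $H<0$, or if $\kappa_2<0$ and $H>0$, then $\kappa_2(s)/H<0$. The first relation then fails, and with it condition (2) of Proposition \ref{prop:NotCE}. By the equivalence (1)$\Leftrightarrow$(2) in that proposition, $f$ is $\mathcal{A}$-equivalent to the cuspidal edge at $(s,t)$.

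The argument has no real obstacle: it amounts to a sign check. The only points needing care are that the front hypothesis supplies the standing assumption $\kappa_2(s)\neq0$ of Proposition \ref{prop:NotCE}, and that $B_3\neq 0$ at singular points, which was already established in the proof of Theorem \ref{thm:criteria_Bscroll}. The hypothesis $\kappa_2>0$ (resp. $\kappa_2<0$) is used only at the singular point in question, so it is enough to assume it pointwise there rather than on all of $I$.
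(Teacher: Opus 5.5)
Your proposal is correct and takes the same approach as the paper. The paper reads the claim directly off the first relation $\frac{\kappa_2}{H}B_3^2=1$ in condition (2) of Proposition~\ref{prop:NotCE}; your explicit appeal to $B_3\neq 0$ on $\Sigma$ simply spells out the sign check that the paper leaves implicit.
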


\begin{proof}
The assertion holds directly from the first condition (2) of Proposition \ref{prop:NotCE}.
\end{proof}

In particular, a necessary condition for $f$ to have a singular point of a front other than the cuspidal edge can be expressed in terms of the Schwarzian derivative $S(h)$ of the function $h$ as follows.

\begin{corollary}
If a timelike minimal B-scroll type surface to $f$ in $\Nil(H)$ is a front at a singular point $(s,t(s))$ and $f$ is not $\mathcal{A}$-equivalent to the cuspidal edge, then the function $h$ in \eqref{eq:ABC_h} satisfies $S(h) < 0$.
\end{corollary}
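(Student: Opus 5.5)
The plan is to combine Proposition~\ref{prop:NotCE} with the curvature relation \eqref{eq:curvature_relation} and then determine the sign of $H$ that is forced. By hypothesis, $f$ is a front at the singular point $(s,t(s))$. Assertion (2) of Theorem~\ref{thm:Schwarzian_criterion} (equivalently Theorem~\ref{thm:criteria_Bscroll}) then gives $\kappa_2(s)\neq 0$, which is exactly the hypothesis needed for Proposition~\ref{prop:NotCE}.

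Since $f$ is not $\mathcal{A}$-equivalent to the cuspidal edge at $(s,t(s))$, Proposition~\ref{prop:NotCE} yields $\frac{\kappa_2}{H}B_3^2=1$ at $s$. In particular $B_3(s)\neq 0$, which is also clear from the proof of Theorem~\ref{thm:criteria_Bscroll}, and therefore $\kappa_2(s)/H = 1/B_3(s)^2>0$.

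Next I substitute $\kappa_2=-S(h)/H$ from \eqref{eq:curvature_relation}, which is valid for the frame \eqref{eq:ABC_h} by Theorem~\ref{thm:Schwarzian_criterion}. This gives
\[
\frac{\kappa_2}{H}=-\frac{S(h)}{H^2}>0 ,
\]
and since $H^2>0$ we conclude $S(h)(s)<0$. The sign of $H$ plays no role at this stage, because it enters only through $H^2$.

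The only real obstacle is making sure that the relation $\kappa_2=-S(h)/H$ holds with the sign conventions actually used in \eqref{eq:ABC_h}, where $B$ carries the extra factor $-1$. I will check it directly from \eqref{eq:FS_Bscroll_2}: the $(2,3)$ entry there is $-S(h)/H$, which matches $\kappa_2$ in \eqref{eq:FS_Bscroll}. The sign therefore agrees, and the argument is complete.
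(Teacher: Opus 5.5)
Your proposal is correct and is essentially the argument the paper intends: the first condition in (2) of Proposition~\ref{prop:NotCE} forces $\kappa_2/H = 1/B_3^2 > 0$, and substituting $\kappa_2 = -S(h)/H$ from \eqref{eq:curvature_relation} gives $-S(h)/H^2 > 0$, i.e.\ $S(h) < 0$. Your sign check is also fine, and the overall sign chosen for $B$ in \eqref{eq:ABC_h} cannot affect the conclusion anyway, since $\kappa_2 = -\langle B'', B''\rangle/(2H^3)$ is quadratic in $B$.
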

Furthermore, under the assumption of Proposition \ref{prop:NotCE}, we obtain the following assertion, which indicates that any family $\{ f^{O} \}_{O \in O(2,1)}$ necessarily includes a timelike minimal surface in $\mathrm{Nil}(H)$ with a singular point of a front other than the cuspidal edge.

\begin{corollary}
Under the same assumption of Proposition \ref{prop:NotCE} with $\kappa_2(s)/H>0$, the family $\{f^O\}_{O\in O(2,1)}$ for each $f$ contains a timelike minimal surface with a front singularity other than the cuspidal edge.
\end{corollary}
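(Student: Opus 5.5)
The plan is to reduce the statement to a single pointwise condition on the unit spacelike vector $O^{-1}\bm{e}_3$, and then to realize that condition using the transitivity of $O(2,1)$ on $\mathbb{S}^2_1$. Fix $s$ with $\kappa_2(s)/H>0$ (in particular $\kappa_2(s)\neq 0$). For $O\in O(2,1)$ the surface $f^O$ is of B-scroll type with null frame $(OA,OB,OC)$ and the same curvature $\kappa_2$, since $\kappa_2$ is an isometry invariant of $\mathbb{L}^3$. Hence by Theorem \ref{thm:criteria_Bscroll}, every singular point $(s,t^O)$ of $f^O$ is a front singularity.

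By Proposition \ref{prop:NotCE} applied to $f^O$, it therefore suffices to find $O$ such that, at $s$,
\[
\frac{\kappa_2}{H}(B^O_3)^2=1,\qquad 2A^O_3B^O_3+1-(C^O_3)^2=0.
\]
Since $\bm{e}_3$ is a unit spacelike vector, $X^O_3=\langle OX,\bm{e}_3\rangle=\langle X, v\rangle$ with $v:=O^{-1}\bm{e}_3\in\mathbb{S}^2_1$. So the task becomes: find a unit spacelike vector $v$ with
\[
\frac{\kappa_2}{H}\langle B,v\rangle^2=1,\qquad 2\langle A,v\rangle\langle B,v\rangle+1-\langle C,v\rangle^2=0 .
\]

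I expand $v$ in the null frame as $v=aA+bB+cC$. The relations \eqref{eq:frame_cond} give
\[
\langle A,v\rangle=-b,\qquad \langle B,v\rangle=-a,\qquad \langle C,v\rangle=c,\qquad \langle v,v\rangle=c^2-2ab.
\]
The second required condition then reads $2ab+1-c^2=0$, which is exactly the unit condition $\langle v,v\rangle=1$. So the only genuine constraint is $a^2=H/\kappa_2(s)$, and this is solvable precisely because $\kappa_2(s)/H>0$; this is where the hypothesis enters. I choose $a=\sqrt{H/\kappa_2(s)}$, $c=0$ and $b=-1/(2a)$. Then $v$ is a unit spacelike vector and $\langle B,v\rangle=-a\neq 0$, so $B^O_3\neq 0$ and the singular point $t^O=-C^O_3(s)/(HB^O_3(s))$ is well defined.

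Finally, $O(2,1)$ acts transitively on $\mathbb{S}^2_1$, so there is $O\in O(2,1)$ with $O v=\bm{e}_3$, that is, $O^{-1}\bm{e}_3=v$. By Proposition \ref{prop:NotCE}, $f^O$ is a front at $(s,t^O)$ that is not $\mathcal{A}$-equivalent to the cuspidal edge.

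The step that needs the most care is the identification $X^O_3=\langle X,O^{-1}\bm{e}_3\rangle$. It uses $\langle \bm{e}_3,\bm{e}_3\rangle=1$ for the spacelike third axis in signature $(-++)$, together with invariance of $\langle\,,\rangle$ under $O$. Its signs should be checked, though the sign of $a$ does not matter since only $a^2$ enters.
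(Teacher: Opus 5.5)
Your proof is correct and follows the paper's route: choose $O$ so that the frame $(OA,OB,OC)$ satisfies condition (2) of Proposition~\ref{prop:NotCE} at $s$, where the paper only asserts that such an $O$ exists and your expansion of $v=O^{-1}\bm{e}_3$ in the null frame actually proves it, showing that the second equation in (2) is just $\langle v,v\rangle=1$ (hence automatic for every frame and every $O$) and that the first is solvable precisely because $\kappa_2(s)/H>0$. As a minor refinement, take $O$ in the identity component $SO^{+}(2,1)$, which is still transitive on $\mathbb{S}^2_1$, so that $OA\times OB=OC$ and $(OA,OB,OC)$ genuinely satisfies \eqref{eq:frame_cond} with the same $H$ and $\kappa_2$.
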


\begin{proof}
For each $f$ with the null frame $(A,B,C)$ as in Proposition \ref{prop:NotCE}, we can always take a Lorentz transform $O\in O(2,1)$ such that $(A,B,C)$ satisfies the condition (2) of Proposition \ref{prop:NotCE} at the singular point $(s,t(s))$. Then the surface $f^O$ is a desired one. 
\end{proof}

\section{Examples}
In this section, based on the results of the previous sections,  
we shall provide concrete examples of B-scroll type timelike minimal surfaces in $ \Nil(H) $ with various singularities.
In all of the following examples, for a given function $h$, we first construct the null frame $(A, B, C)$ by applying Theorem~\ref{thm:Schwarzian_criterion}, and then obtain a B-scroll type timelike minimal surface $f$ in $\Nil(H)$ with singularities by applying Theorem~\ref{fact:correspond_Bscroll}.

First, by giving a function $h$ that satisfies condition (3) of Theorem~\ref{thm:Schwarzian_criterion}, we can construct surfaces that admit cuspidal cross caps.

\begin{example}\label{ex:CCR1}
If we take $h(s):=s+s^3$, its Schwarzian derivative $S(h)$ is computed as
\[
S(h)=\frac{6 - 36 s^2}{(1 + 3 s^2)^2}.
\]
Then it satisfies the conditions at $s^\pm :=\pm 1/\sqrt{6}$
\[
S(h)(s^\pm) = 0,\quad S(h)'(s^\pm)=\mp 16\sqrt{\frac{2}{3}}(\neq 0).
\]
Hence, by (3) of Theorem \ref{thm:Schwarzian_criterion}, the surface $f$ in $\Nil(H)$ associated with the null frame $(A,B,C)$ defined by \eqref{eq:ABC_h} for an arbitrary $H\neq 0$  is $\mathcal{A}$-equivalent to the cuspidal cross caps at $(s,t)=(s^\pm, -C_3(s^\pm)/{HB_3(s^\pm)})$. See Figure \ref{Fig:CCR} for the case $H=1$.

\begin{figure}[h!]
\vspace{-0.5cm}
\begin{center}
 \begin{tabular}{{c@{\hspace{-10mm}}c}}
\hspace{10mm}   \resizebox{8.0cm}{!}{\includegraphics[clip,scale=0.30,bb=0 0 555 449]{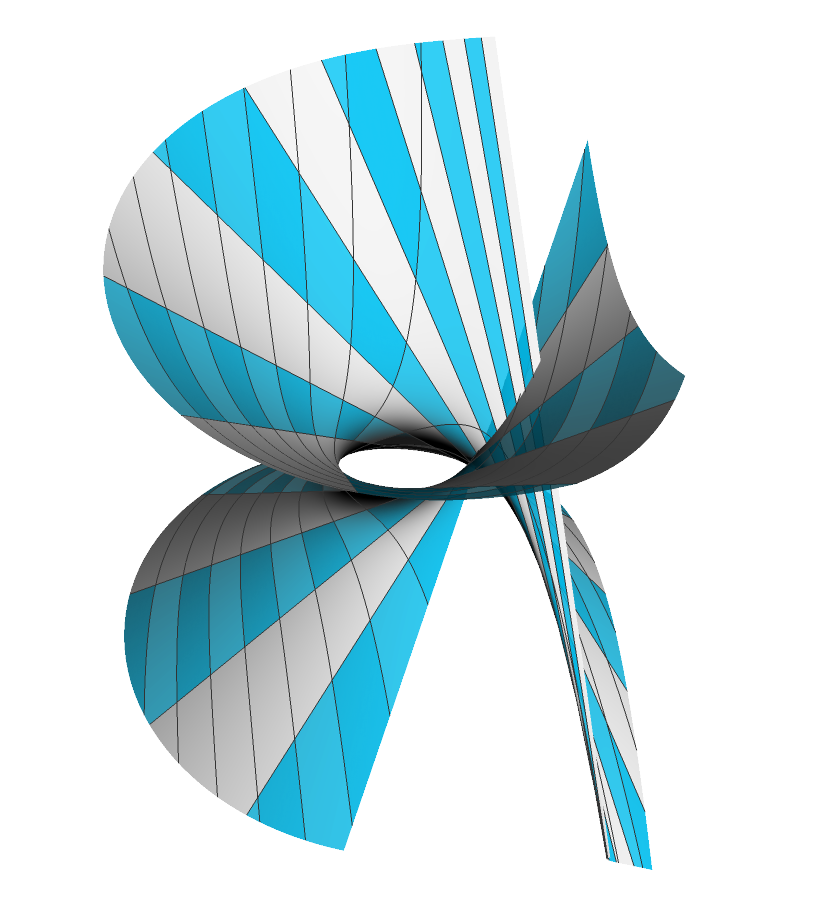}}&
 \hspace{-0mm} 
\resizebox{8.5cm}{!}{\includegraphics[clip,scale=0.30,bb=0 0 555 449]{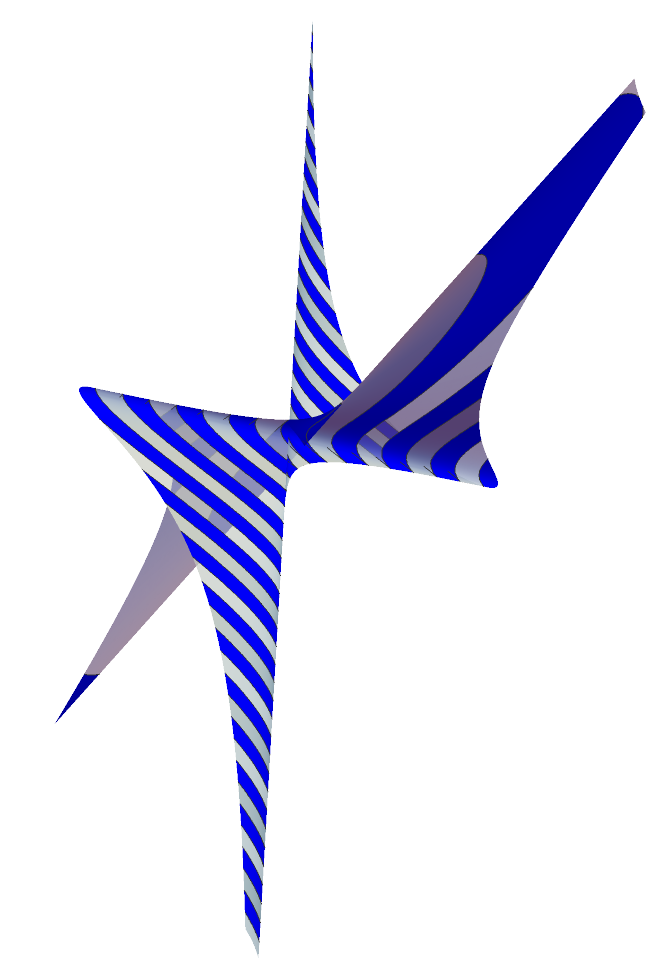}} \\
  {\hspace{-15mm}\footnotesize  timelike CMC $H=1$ B-scroll $f_L$ in $\mathbb{L}^3$} &
   {\hspace{-35mm}\footnotesize  timelike minimal B-scroll type surface $f$ in $\Nil(1)$}
 \end{tabular}
 \caption{A timelike constant mean curvature (CMC) $H=1$ B-scroll in $\mathbb{L}^3$ (left) and the corresponding timelike minimal B-scroll type surface in $\Nil(1)$ with cuspidal cross caps (right). }
 \label{Fig:CCR}
\end{center}
\end{figure}

\end{example}

\begin{example}
As another function $h$ satisfying the same conditions in Example \ref{ex:CCR1}, we may take 
$h(s) := \cot\!\left({e^s}/{2}\right)$.
The Schwarzian derivative $S(h)$ is computed as
\[
S(h)=\frac{1}{2}(e^{2s}-1),
\]
which satisfies $S(h)(0)=0$ and $S(h)'(0)=1\neq 0$.
Hence, the surface $f$ in $\Nil(H)$ associated with the null frame $(A,B,C)$ defined by \eqref{eq:ABC_h} for an arbitrary $H\neq 0$ is $\mathcal{A}$-equivalent to the cuspidal cross cap at $(s,t)=(0, -C_3(0)/{HB_3(0)})$. See Figure \ref{Fig:CCR2} for the case $H=1$.

\begin{figure}[h!]
\vspace{-1.0cm}
\begin{center}
 \begin{tabular}{{c@{\hspace{-10mm}}c}}
\hspace{10mm}   \resizebox{8.0cm}{!}{\includegraphics[clip,scale=0.30,bb=0 0 555 449]{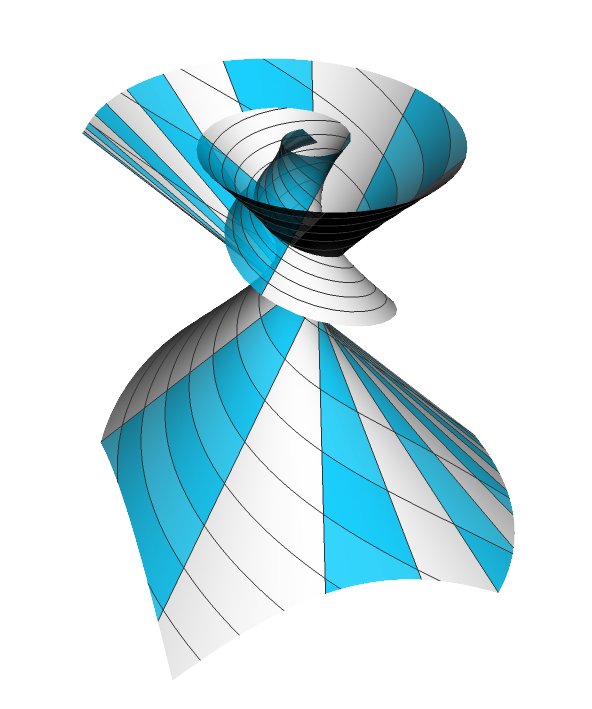}}&
 \hspace{-0mm} 
\resizebox{8.5cm}{!}{\includegraphics[clip,scale=0.30,bb=0 0 555 449]{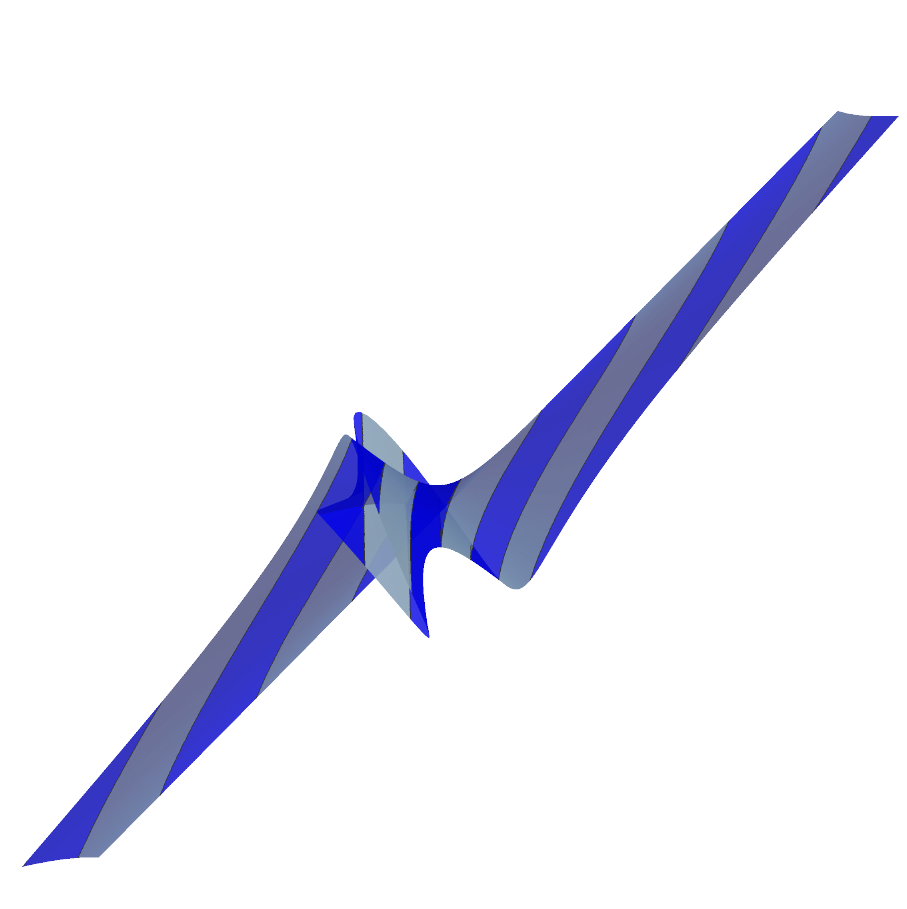}} \\
  {\hspace{-15mm}\footnotesize  timelike CMC $H=1$ B-scroll $f_L$ in $\mathbb{L}^3$} &
  {\hspace{-35mm}\footnotesize  timelike minimal B-scroll type surface $f$ in $\Nil(1)$}
 \end{tabular}
 \caption{A timelike constant mean curvature (CMC) $H=1$ B-scroll in $\mathbb{L}^3$ (left) and the corresponding timelike minimal B-scroll type surface in $\Nil(1)$ with a cuspidal cross cap (right). }
 \label{Fig:CCR2}
\end{center}
\end{figure}

\end{example}

Next, we give an example of timelike minimal B-scroll type surface with a swallowtail.

\begin{example}
For the function $h(s)=\tanh{s}$, the Schwarzian derivative is $S(h)=-2$. Then, by using $h$ and assuming $H=1$, we obtain the following null frame $(A,B,C)$ defined by \eqref{eq:ABC_h} in Theorem \ref{thm:Schwarzian_criterion}.
\[
A=(\cosh{2s}, 1, -\sinh{2s}),\quad B=\frac{1}{2}(\cosh{2s}, -1, -\sinh{2s}),\quad C=(\sinh{2s}, 0, -\cosh{2s}).
\]

The constant mean curvature $1$ B-scroll $f_L(s,t)=\gamma(s)+tB(s)$ in $\mathbb{L}^3$ associated with $(A,B,C)$ and the corresponding $f$ in $\Nil(1)$ can be computed as
\begin{align*}
f_L(s,t)&=\frac{1}{2}\left(\sinh{2s} t\cosh{2s}, 2s-t, -1-\cosh{2s}-t\sinh{2s} \right),\\
f(s,t)&=\frac{1}{2}\left(\sinh{2s} +t\cosh{2s}, 2s-t, -\frac{1}{2} -st\cosh{2s}+\left(-s+\frac{t}{2}\right)\sinh{2s}\right)
\end{align*}
Then, the curve $c_L(s):=f_L(s, -C_3(s)/B_3(s))$ in Theorem \ref{thm:criteria2} is
\[
c_L(s)=\frac{1}{2}\left( -\frac{1}{2}\left(3+\cosh{4s}\right)\mathrm{csch}{2s}, 2s+\coth{s}+\tanh{s},\sinh^2{s} \right).
\]
On the other hand, any candidate for a singular point other than the cuspidal edge must satisfy equation (2) in Proposition \ref{prop:NotCE}. Since the first equation in (2) becomes $2B_3^2 = 1$, the values of $s$ satisfying this are
\[
s^\pm:=\frac{1}{4}\log{(5\pm 2\sqrt{6})}.
\]

At this $s^\pm$, we can check that $c_L$ satisfies
\[
c_L'(s^\pm)= (0,0,\pm \sqrt{2}),\quad c_L''(s^\pm)= (\mp 6\sqrt{2},\pm 2\sqrt{6}, 2\sqrt{3}).
\]
Hence, the criterion in (4) of Theorem \ref{thm:criteria2} shows that $s = s^{\pm}$ correspond to swallowtails. See Figure \ref{Fig:SW}.
\begin{figure}[h!]
\vspace{-0.5cm}
\begin{center}
 \begin{tabular}{{c@{\hspace{-10mm}}c}}
\hspace{10mm}   \resizebox{8.0cm}{!}{\includegraphics[clip,scale=0.30,bb=0 0 555 449]{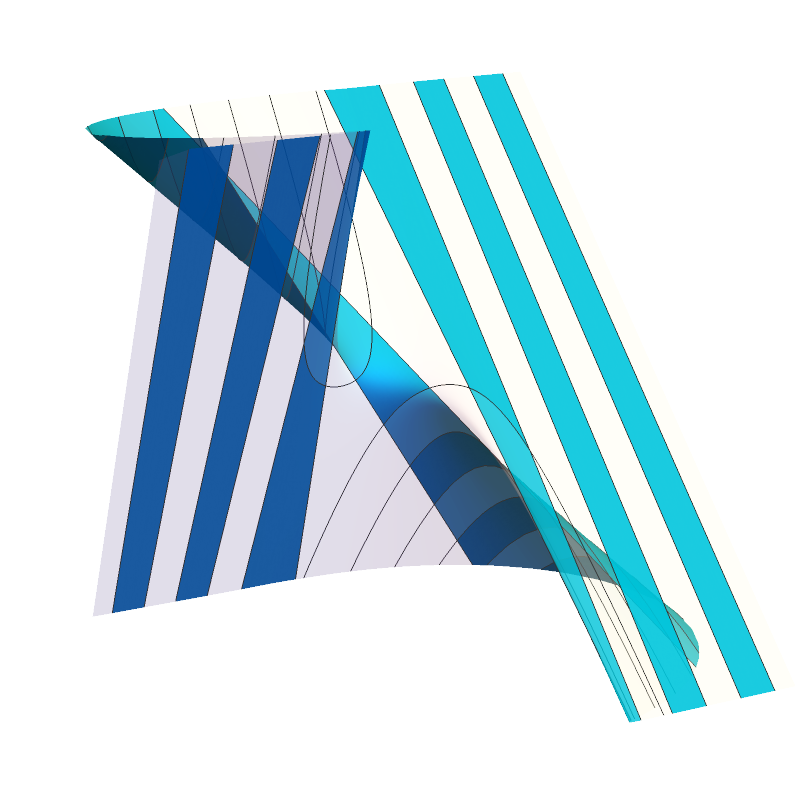}}&
 \hspace{-0mm} 
\resizebox{8.5cm}{!}{\includegraphics[clip,scale=0.30,bb=0 0 555 449]{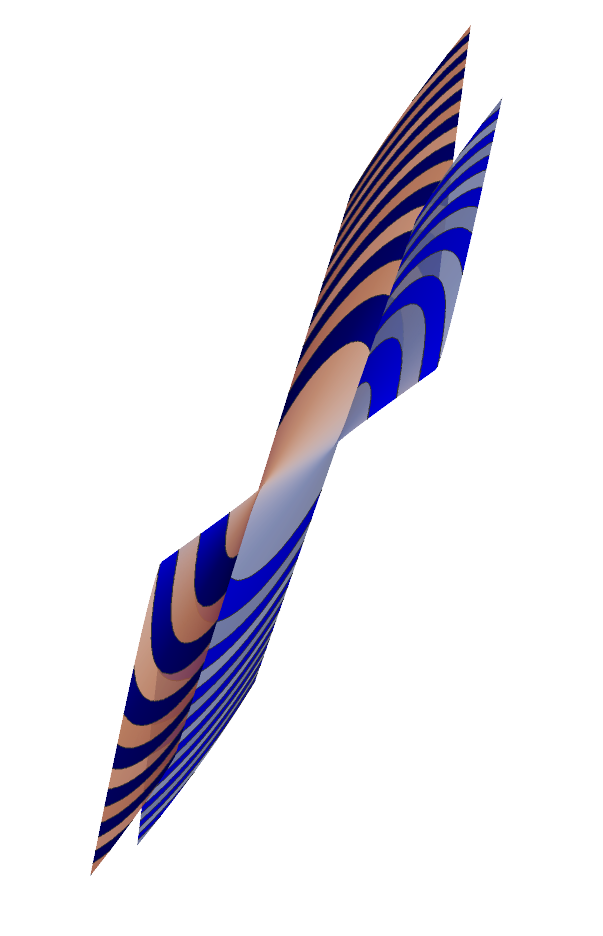}} \\
  {\hspace{-15mm}\footnotesize  timelike CMC $H=1$ B-scroll $f_L$ in $\mathbb{L}^3$} &
  {\hspace{-35mm}\footnotesize  timelike minimal B-scroll type surface $f$ in $\Nil(1)$}
 \end{tabular}
 \caption{A timelike constant mean curvature (CMC) $H=1$ B-scroll in $\mathbb{L}^3$ (left) and the corresponding timelike minimal B-scroll type surface in $\Nil(1)$ with swallowtails (right). }
 \label{Fig:SW}
\end{center}
\end{figure}

\end{example}

%%%%%%%%%%%%%%%%%%%%%%%%%%%%%%%%%%

%%%%%%%%%%%%%%%%%%%%%%%%%%%%%%%%%%

%%%%%%%%%%%%%%%

\end{document}